\newcommand{\modify}[1]{{{#1}}}
\newcommand{\anonymous}[2]{{#2}}
\def\noprint#1{}
\newtheorem{assumption}{Assumption}
\newcommand{\lambdamin}{\lambda_{\mbox{\rm\scriptsize{min}}}}
\newcommand{\bfone}{\mathbf{1}}
\newcommand{\R}{\mathbb{R}}
\newcommand{\E}{\mathbb{E}}
\def\sjwcomment#1{\footnote{SJW: #1}}
\newcommand{\TheTitle}{Inexact Successive Quadratic Approximation for Regularized Optimization}
\newcommand{\RunningTitle}{Successive Quadratic Approximation for Regularized Optimization}
\titlerunning{\RunningTitle}
\title{{\TheTitle}
	\thanks{This work was supported by NSF awards 1447449, 1628384,
		1634579, and 1740707;
		Subcontracts and 3F-30222 and 8F-30039 from Argonne National
		Laboratory; and
	Award N660011824020 from the DARPA Lagrange Program.}
}
\author{
Ching-pei Lee
\and
Stephen~J. Wright
}
\institute{Ching-pei Lee \at
	\email{ching-pei@cs.wisc.edu}\\
	Stephen~J. Wright \at
	\email{swright@cs.wisc.edu}\\
	Computer Sciences Department and Wisconsin Institute for
	Discovery, University
	of Wisconsin-Madison, Madison, WI, USA
}
\date{received: March 2018 / accepted: January 2019}
\newcommand{\citet}[1]{\cite{#1}}
\newcommand{\citep}[1]{\cite{#1}}
\journalname{Computational Optimization and Applications}
\begin{document}

\maketitle

\begin{abstract}
Successive quadratic approximations, or second-order proximal methods,
are useful for minimizing functions that are a sum of a smooth part
and a convex, possibly nonsmooth part that promotes regularization.
Most analyses of iteration complexity focus on the special case of
proximal gradient method, or accelerated variants thereof.  There have
been only a few studies of methods that use a second-order
approximation to the smooth part, due in part to the difficulty of
obtaining closed-form solutions to the subproblems at each
iteration. In fact, iterative algorithms may need to be used to find
inexact solutions to these subproblems.
In this work, we present global analysis of the iteration complexity
of inexact successive quadratic approximation methods, showing that an
inexact solution of the subproblem that is within a fixed
multiplicative precision of optimality suffices to guarantee the same
order of convergence rate as the exact version, with complexity
related in an intuitive way to the measure of inexactness. Our result allows
flexible choices of the second-order term, including Newton and
quasi-Newton choices, and does not necessarily require increasing
precision of the subproblem solution on later iterations.  For
problems exhibiting a property related to strong convexity, the
algorithms converge at global linear rates. For general convex
problems, the convergence rate is linear in early stages, while the
overall rate is $O(1/k)$. For nonconvex problems, a first-order
optimality criterion converges to zero at a rate of $O(1/\sqrt{k})$.
\keywords{ Convex optimization \and Nonconvex optimization \and
  Regularized optimization \and Variable
  metric \and Proximal method \and Second-order approximation \and
  Inexact method}
\end{abstract}


\section{Introduction} \label{sec:intro}
We consider  the following regularized optimization problem:
\begin{equation} \label{eq:f}
	\min_{x} \, F(x) \coloneqq f(x) + \psi(x),
\end{equation}
where $f:\R^n \to \R$ is $L$-Lipschitz-continuously differentiable,
and $\psi:\R^n \to \R$ is convex, extended-valued, proper, and closed,
but might be nondifferentiable.  Moreover, we assume that $F$ is
lower-bounded and the solution set $\Omega$ of \eqref{eq:f} is
non-empty. Unlike the many other works on this topic, we focus on the
case in which $\psi$ does {\em not} necessarily have a simple structure,
such as (block) separability, which allows a prox-operator to be
calculated economically, often in closed form. Rather, we assume that
subproblems that involve $\psi$ explicitly are solved inexactly, by an
iterative process.


Problems of the form \eqref{eq:f} arise in many contexts. The function
$\psi$ could be an indicator function for a trust region or a convex
feasible set. It could be a multiple of an $\ell_1$ norm or a
sum-of-$\ell_2$ norms. It could be the nuclear norm for a matrix
variable, or the sum of absolute values of the elements of a
matrix. It could be a smooth convex function, such as $\| \cdot\|_2^2$
or the squared Frobenius norm of a matrix. Finally, it could be a
combination of several of these elements, as happens when different
types of structure are present in the solution. In some of these
situations, the prox-operator involving $\psi$ is expensive to calculate
exactly.




We consider algorithms that generate a sequence $\{ x^k
\}_{k=0,1,\dotsc}$ from some starting point $x^0$, and solve the
following subproblem inexactly at each iteration, for some symmetric
matrix $H_k$:
\begin{equation}
\label{eq:quadratic}
\arg\min_{d\in \R^n} \, Q^{x^k}_{H_k}\left( d \right) \coloneqq
\nabla f\left( x^k \right)^Td + \frac12 d^T{H_k}d + \psi\left( x^k + d
\right) - \psi\left( x^k \right).
\end{equation}
We abbreviate the objective in \eqref{eq:quadratic} as $Q_k(\cdot)$
(or as $Q(\cdot)$ when we focus on the inner workings of iteration $k$).
In some results, we allow $H_k$ to have zero or negative eigenvalues,
provided that $Q_k$ itself is strongly convex. (Strong convexity in
$\psi$ may overcome any lack of strong convexity in the quadratic part of
\eqref{eq:quadratic}.)

In the special case of the proximal-gradient algorithm
\citep{ComW05a,WriNF08a}, where $H_k$ is a positive multiple of the
identity, the subproblem \eqref{eq:quadratic} can often be solved
cheaply, particularly when $\psi$ is (block) separable, by means of a
prox-operator involving $\psi$. For more general choices of $H_k$, or for
more complicated regularization functions $\psi$, it may make sense to
solve \eqref{eq:quadratic} by an iterative process, such as
accelerated proximal gradient or coordinate descent.  Since it may be
too expensive to run this iterative process to obtain a high-accuracy
solution of \eqref{eq:quadratic}, we consider the possibility of an
inexact solution. In this paper, we assume that the inexact solution
satisfies the following condition, for some constant $\eta \in [0,1)$:
\begin{equation}
Q\left(d\right) - Q^*  \le \eta\left(Q\left(0\right) - Q^*\right) 
\quad \Leftrightarrow \quad Q(d) \le (1-\eta) Q^*,
\label{eq:approx}
\end{equation}
where $Q^* \coloneqq \inf_d Q(d)$ and $Q(0)=0$. The value $\eta=0$
corresponds to exact solution of \eqref{eq:quadratic}. Other values
$\eta \in (0,1)$ indicate solutions that are inexact to within a {\em
  multiplicative} constant.

The condition \eqref{eq:approx} is studied in
\cite[Section~4.1]{BonLPP16a}, which applies a primal-dual approach to
\eqref{eq:quadratic} to satisfy it. In this connection, note that if
we have access to a lower bound $Q_{LB} \le Q^*$ (obtained by finding
a feasible point for the dual of \eqref{eq:quadratic}, or other
means), then any $d$ satisfying $Q(d) \le (1-\eta) Q_{LB}$ also
satisfies \eqref{eq:approx}.

In practical situations, we need not enforce \eqref{eq:approx}
explicitly for some chosen value of $\eta$. \modify{In fact, we do not necessarily require $\eta$ to be known, or \eqref{eq:approx} to be checked at all.  Rather, we can} take
advantage of the convergence rates of whatever solver is applied to
\eqref{eq:quadratic} to ensure that \eqref{eq:approx} holds for {\em
  some} value of $\eta \in (0,1)$, possibly unknown.  For instance, if
we apply an iterative solver to the strongly convex function $Q$ in
\eqref{eq:quadratic} that converges at a global linear rate
$(1-\tau)$, then the ``inner'' iteration sequence
$\{d^{(t)}\}_{t=0,1,\dotsc}$ (starting from some $d^{(0)}$ with
$Q(d^{(0)}) \le 0$) satisfies
\begin{equation}
	Q(d^{\left(t\right)}) - Q^*
\leq \left(1 - \tau\right)^t \left(Q\left(0\right) - Q^*\right), \quad
t=0,1,2,\dotsc.
\label{eq:titers}
\end{equation}
If we fix the number of inner iterations at $T$ (say), then $d^{(T)}$
satisfies \eqref{eq:approx} with $\eta= (1-\tau)^T$.  Although $\tau$
might be unknown as well, we can implicitly tune the accuracy of the
solution by adjusting $T$.  On the other hand, if we wish to attain a
certain target accuracy $\eta$ and have an estimate of rate $\tau$, we
can choose the number of iterations $T$ large enough that $(1-\tau)^T
\le \eta$. Note that $\tau$ depends on the extreme eigenvalues of
$H_k$ in some algorithms; we can therefore choose $H_k$ to ensure that
$\tau$ is restricted to a certain range for all $k$.

Empirically, we observe that Q-linear methods for solving
\eqref{eq:quadratic} often have rapid convergence in their early
stages, with slower convergence later.
Thus, a moderate value of $\eta$ may be preferable to a smaller value,
because moderate accuracy is attainable in disproportionately fewer
iterations than high accuracy.


A practical stopping condition for the subproblem solver in our
framework is just to set a fixed number of iterations, provided that a
linearly convergent method is used to solve \eqref{eq:quadratic}.
This guideline can be combined with other more sophisticated
approaches, possibly adjusting the number of inner iterations (and
hence implicitly $\eta$) according to some heuristics.  For
simplicity, our analysis assumes a fixed choice of $\eta \in
(0,1)$. We examine in particular the number of outer iterations
required to solve \eqref{eq:f} to a given accuracy $\epsilon$.  We
show that the dependence of the iteration complexity on the
inexactness measure $\eta$ is benign, increasing only modestly with
$\eta$ over approaches that require exact solution of
\eqref{eq:quadratic} for each $k$.

\subsection{Quadratic Approximation Algorithms} \label{sec:algs}

To build complete algorithms around the subproblem
\eqref{eq:quadratic}, we either do a step size line search along the
inexact solution $d^k$, or adjust $H_k$ and recompute $d^k$, seeking
in both cases to satisfy a familiar ``sufficient decrease'' criterion.
We present two algorithms that reflect each of these approaches.
The first uses a line search approach on the step size with a modified
Armijo rule, as presented in \cite{TseY09a}.  We consider a
backtracking line-search procedure for simplicity; the analysis could
be adapted for more sophisticated procedures.  Given the current point
$x^k$, the update direction $d^k$ and parameters $\beta, \gamma \in
(0,1)$, backtracking finds the smallest nonnegative integer $i$ such
that the step size $\alpha_k = \beta^i$ satisfies
\begin{equation}
F\left(x^k + \alpha_k d^k \right) \leq F\left( x^k \right) + \alpha_k \gamma
\Delta_k,
\label{eq:armijo}
\end{equation}
where
\begin{equation}
\Delta_k \coloneqq \nabla f\left( x^k \right)^Td^k + \psi\left( x^k + d^k
\right) - \psi\left( x^k \right).
\label{eq:Delta}
\end{equation}
This version appears as Algorithm~\ref{alg:QuasiArmijo}.  The exact
version of this algorithm can be considered as a special case of the
block-coordinate descent algorithm of \citet{TseY09a}.\footnote{The
  definition of $\Delta$ in \cite{TseY09a} contains another term
  $\omega d^T H d/2$, where $\omega \in [0,1)$ is a parameter.  We
    take $\omega =0$ for simplicity, but our analysis can be extended
    in a straightforward way to the case of $\omega \in (0,1)$.}  In
  \cite{BonLPP16a}, Algorithm~\ref{alg:QuasiArmijo} (with possibly a
  different criterion on $d^k$) is called the ``variable metric
  inexact line-search-based method''.  (We avoid the term ``metric''
  because we consider the possibility of indefinite $H_k$ in some of
  our results.) More complicated metrics, not representable by a
  matrix norm, were also considered in \cite{BonLPP16a}. Since our
  analysis makes use only of the smallest and largest eigenvalues of
  $H_k$ (which correspond to the strong convexity and Lipschitz
  continuity parameters of the quadratic approximation term), we could
  also generalize our approach to this setting.  We present only the
  matrix-representable case, however, as it allows a more direct
  comparison with the second algorithm presented next.

\begin{algorithm}
\begin{algorithmic}
\State Given $\beta, \gamma\in (0,1)$, $x^0 \in
	\R^n$;
\For{$k=0,1,2,\dotsc$}
	\State Choose a symmetric $H_k$ that makes $Q_k$ strongly convex;
	\State Obtain from \eqref{eq:quadratic}  a vector $d^k$ satisfying \eqref{eq:approx}, \modify{for some fixed $\eta \in [0,1)$};
	\State Compute $\Delta_k$ by \eqref{eq:Delta};
	\State $\alpha_k \leftarrow 1$;
	\While{\eqref{eq:armijo} is not satisfied}
		\State $\alpha_k \leftarrow \beta \alpha_k$;
	\EndWhile
	\State $x^{k+1} \leftarrow x^k + \alpha_k d^k$;
\EndFor
\end{algorithmic}
\caption{Inexact Successive Quadratic Approximation with Backtracking Line Search}
\label{alg:QuasiArmijo}
\end{algorithm}

The second algorithm uses the following sufficient decrease
criterion from \cite{SchT16a,GhaS16a}:
\begin{equation}
F\left(x\right) - F\left(x + d\right) \geq -\gamma
	Q^x_H\left(d\right) \geq 0,
\label{eq:decrease}
\end{equation}
for a given parameter $\gamma \in (0,1]$. If this criterion is not
satisfied, the algorithm modifies $H$ and recomputes $d^k$.  The
criterion \eqref{eq:decrease} is identical to that used by
trust-region methods (see, for example, \cite[Chapter 4]{NocW06a}), in
that the ratio between the actual objective decrease and the decrease
predicted by $Q$ is bounded below by $\gamma$; that is,
\begin{equation*}
\frac{F\left( x \right) - F\left( x + d \right)}{Q^x_H\left( 0
\right) - Q^x_H\left( d \right)} \geq \gamma.
\end{equation*}

We consider two variants of modifying $H$ such that
\eqref{eq:decrease} is satisfied. The first successively
increases $H$ by a factor $\beta^{-1}$ (for some parameter $\beta \in
(0,1)$) until \eqref{eq:decrease} holds.  We require in this variant
that the initial choice of $H$ is positive definite, so that all
eigenvalues grow by a factor of $\beta^{-1}$ at each multiplication.
The second variant uses a similar strategy, except that $H$ is
modified by adding a successively larger multiple of the identity,
until \eqref{eq:decrease} holds. (This algorithm allows negative
eigenvalues in the initial estimate of $H$.)  These two approaches are
defined as the first and the second variants of
Algorithm~\ref{alg:QuasiModifyH}, respectively.

\begin{algorithm}
\begin{algorithmic}[1]
\State Given $\beta, \gamma\in (0,1]$,  $x^0 \in
	\R^n$;
\For{$k=0,1,2,\dotsc$}
\If {Variant 1} Choose $H^0_k \succ 0$; \EndIf
\If {Variant 2} Choose a suitable $H^0_k$; \EndIf
\State $\alpha_k \leftarrow 1$, $H_k \leftarrow H^0_k$;
\State Obtain from \eqref{eq:quadratic}  a vector $d^k$ satisfying
\eqref{eq:approx}, \modify{for some fixed $\eta \in [0,1)$};
\While{\eqref{eq:decrease} is not satisfied}
	\If{Variant 1} $\alpha_k \leftarrow \beta \alpha_k$, $H_k
	\leftarrow  H^0_k/\alpha_k$; \EndIf
	\If{Variant 2} $H_k \leftarrow H^0_k + \alpha_k^{-1} I$, $\alpha_k
	\leftarrow \beta \alpha_k$; \EndIf
	\State Obtain from \eqref{eq:quadratic} a vector $d^k$ satisfying
		\eqref{eq:approx};
\EndWhile
\State $x^{k+1} \leftarrow x^k + d^k$;
\EndFor
\end{algorithmic}
\caption{Inexact Successive Quadratic Approximation with Modification
of the Quadratic Term}
\label{alg:QuasiModifyH}
\end{algorithm}


Algorithm~\ref{alg:QuasiArmijo} and Variant 1 of Algorithm
\ref{alg:QuasiModifyH} are direct extensions of backtracking line
search in the smooth case, in the sense that when $\psi$ is not present,
both approaches are identical to shrinking the step size.  However,
aside from the sufficient decrease criteria, the two differ when the
regularization term is present.

The second variant of Algorithm~\ref{alg:QuasiModifyH} is similar to
the method proposed in \cite{SchT16a,GhaS16a}, with the only
difference being the inexactness criterion of the subproblem solution.
This variant of modifying $H$ can be seen as interpolating between the
step from the original $H$ and the proximal gradient step.  It is also
a generalization of the trust-region technique for smooth
optimization.  When $\psi$ is not present, adding a multiple of the
identity to $H$ in \eqref{eq:quadratic} is equivalent to shrinking the
trust region \citep{MorS83a}. We can therefore think of
Algorithm~\ref{alg:QuasiModifyH}, Variant 2 as a generalized
trust-region approach for regularized problems.

Rather than our multiplicative criterion \eqref{eq:approx}, the works
\cite{SchT16a,GhaS16a} use an {\em additive} criterion to measure
inexactness of the solution.  In the analysis of
\cite{SchT16a,GhaS16a}, this tolerance must then be reduced to zero at
a certain rate as the algorithm progresses, resulting in growth of the
number of inner iterations per outer iteration as the algorithms
progress.  By contrast, we attain satisfactory performance (both in
theory and practice) for a fixed value $\eta \in (0,1)$ in
\eqref{eq:approx}.




Which of the algorithms described above is ``best'' depends on the
circumstances. When \eqref{eq:quadratic} is expensive to solve,
Algorithm~\ref{alg:QuasiArmijo} may be preferred, as it requires
approximate solution of this subproblem just once on each outer
iteration. On the other hand, when $\psi$ has special properties, such as
inducing sparsity or low rank in $x$, Algorithm \ref{alg:QuasiModifyH}
might benefit from working with sparse iterates and solving the
subproblem in spaces of reduced dimension.

Variants and special cases of the algorithms above have been discussed
extensively in the literature. Proximal gradient algorithms have $H =
\xi I$ for some $\xi > 0$ \citep{ComW05a,WriNF08a}; proximal-Newton uses
$H = \nabla^2 f$ \citep{LeeSS14a,RodK16a,LiAV17a};
proximal-quasi-Newton and variable metric use quasi-Newton
approximations for $H_k$ \citep{SchT16a,GhaS16a}.  The term
``successive quadratic approximation'' is also used by
\citet{ByrNO16a}.  Our methods can even be viewed as a special case of
block-coordinate descent \citep{TseY09a} with a single block.
The key difference in this work is the use of the inexactness criterion
\eqref{eq:approx}, while existing works either assume exact solution
of \eqref{eq:quadratic}, or use a different criterion that requires
increasing accuracy as the number of outer iterations grows. 
Some of these works provide only an asymptotic convergence guarantee
and a local convergence rate, with a lack of clarity about when the
fast local convergence rate will take effect.  An exception is
\cite{BonLPP16a}, which also makes use of the condition
\eqref{eq:approx}.  However, \cite{BonLPP16a} gives convergence rate
only for convex $f$ and requires existence of a scalar $\mu \ge 1$ and
a sequence $\{\zeta_k\}$ such that
\begin{equation}
\sum_{k=0}^\infty \zeta_k < \infty,\quad \zeta_k \geq 0,\quad H_{k+1}
\preceq \left(1 + \zeta_k\right) H_k, \quad \mu I \succeq H_k \succeq
\frac{1}{\mu}I, \quad \forall k,
\label{eq:assumption}
\end{equation}
where $A \succeq B$ means that $A-B$ is positive semidefinite.
This condition may preclude such useful and practical choices of $H_k$
as the Hessian and quasi-Newton approximations.  We believe that our
setting may be more general, practical, and straightforward in some
situations.

\subsection{Contribution}

This paper shows that, when the initial value of $H_k$ at all outer
iterations $k$ is chosen appropriately, and that \eqref{eq:approx} is
satisfied for all iterations, then the objectives of the two
algorithms converge at a global Q-linear rate under an ``optimal set
strong convexity'' condition defined in \eqref{eq:strong}, and at a
sublinear rate for general convex functions.  When $F$ is nonconvex,
we show sublinear convergence of the first-order optimality condition.
Moreover, to discuss the relation between the subproblem solution
precision and the convergence rate, we show that the iteration
complexity is proportional to either $1/(1 - \eta)$ or  $1/(2(1 -
\sqrt{\eta}))$, depending on the properties of $f$ and $\psi$, and
the algorithm parameter choices.\footnote{Note that for $\eta \in [0,1)$, $1
    / (1- \eta) > 1/ (2(1 - \sqrt{\eta}))$.}

In comparison to existing works, our major contributions are as follows.
\begin{itemize}
\item We quantify how the inexactness criterion \eqref{eq:approx}
  affects the step size of Algorithm~\ref{alg:QuasiArmijo}, the norm
  of the final $H$ in Algorithm~\ref{alg:QuasiModifyH}, and the
  iteration complexity of these algorithms.  We discuss why the
  process for finding a suitable value of $\alpha_k$ in each algorithm
  can potentially improve the convergence speed when the
  quadratic approximations incorporate curvature information,
  leading to acceptance of step sizes whose values are close to one.
\item We provide a global convergence rate result on the first-order
  optimality condition for the case of nonconvex $f$ in \eqref{eq:f}
  for general choices of $H_k$, without assumptions beyond the
  Lipschitzness of $\nabla f$.
\item The global R-linear convergence case of a similar algorithm in
  \cite{GhaS16a} when $F$ is strongly convex is improved to a global
  Q-linear convergence result for a broader class of problems.
\item For general convex problems, in addition to the known sublinear
  ($1/k$) convergence rate, we show linear convergence with a rate
  independent of the conditioning of the problem in the early stages
  of the algorithm.
 \item Faster linear convergence in the early iterations also applies
   to problems with global Q-linear convergence, explaining in part
   the empirical observation that many methods converge rapidly in
   their early stages before settling down to a slower rate.  This
   observation also allows improvement of iteration
   complexities. 
\end{itemize}

\subsection{Related Work}
\label{sec:related}

Our general framework and approach, and special cases thereof, have
been widely studied in the literature. Some related work has already
been discussed above. We give a broader discussion in this section.

When $\psi$ is the indicator function of a convex constraint set, our
approach includes an inexact variant of a constrained Newton or quasi-Newton
method. There are a number of papers on this approach, but their
convergence results generally have a different flavor from ours.  They
typically show only asymptotic convergence rates, together with global
convergence results without rates, under weaker smoothness and
convexity assumptions on $f$ than we make here.  For example, when $\psi$
is the indicator function of a ``box'' defined by bound constraints,
\citet{ConGT88a} applies a trust-region framework to solve
\eqref{eq:quadratic} approximately, and shows asymptotic convergence.
The paper \cite{ByrLNZ95a} uses a line-search approach, with $H_k$
defined by an L-BFGS update, and omits convergence results.  For
constraint sets defined by linear inequalities, or general convex
constraints, \cite{BurMT90a} shows global convergence of a trust
region method using the Cauchy point. A similar approach using the
exact Hessian as $H_k$ is considered in \cite{LinM99a}, proving local
superlinear or quadratic convergence in the case of linear
constraints.

Turning to our formulation \eqref{eq:f} in its full generality,
Algorithm~\ref{alg:QuasiArmijo} is analyzed in \cite{BonLPP16a}, which
refers to the condition \eqref{eq:approx} as ``$\eta$-approximation.''
(Their $\eta$ is equivalent to $1 - \eta$ in our notation.) This paper
shows asymptotic convergence of $Q_k(d)$ to zero without requiring
convexity of $F$, Lipschitz continuity of $\nabla f$, or a fixed value
of $\eta$.  The only assumptions are that $Q_k(d^k) < 0$ for all $k$
and the sequence of objective function values converges (which always
happens when $F$ is bounded below).
Under the additional assumptions that $\nabla f$ is Lipschitz
continuous, $F$ is convex,
\eqref{eq:assumption}, and \eqref{eq:approx}, they showed convergence
of the objective value at a $1/k$ rate. The same authors considered
convergence for nonconvex functions satisfying a
Kurdyka-{\L}ojasiewicz condition in \cite{BonLPPR17a}, but the exact
rates are not given. Our results differ in not requiring the
assumption \eqref{eq:assumption}, and we are more explicit about the
dependence of the rates on $\eta$.  Moreover, we show detailed
convergence rates for several additional classes of problems.

A version of Algorithm~\ref{alg:QuasiModifyH} without line search but
requiring $H_k$ to {\em overestimate} the Hessian, as follows:
\begin{equation*}
	f(x^k + d) \leq f(x^k) + \nabla f(x^k)^T d + \frac12 d^T H_k d
\end{equation*}
is considered in \cite{ChoPR14a}.
  Asymptotic convergence is proved, but no rates are given.

Convergence of an inexact proximal-gradient method (for which $H_k = L
I$ for all $k$) is discussed in \cite{SchRB11a}.  With this choice of
$H_k$, \eqref{eq:decrease} always holds with $\gamma = 1$.  They also
discuss its accelerated version for convex and strongly convex
problems.  Instead of our multiplicative inexactness criterion, they
assume an additive inexactness criterion in the subproblem, of the
form
\begin{equation}
	Q_k\left(d^k\right) \leq Q_k^* + \epsilon_k.
	\label{eq:additive}
\end{equation}
Their analysis also allows for an error $e^k$ in the gradient term
in \eqref{eq:quadratic}.  The paper shows that for general convex
problems, the objective value converges at a $1/k$ rate provided that
$\sum_k \sqrt{\epsilon_k}$ and $\sum_k \|e^k\|$ converge.  For
strongly convex problems, they proved R-linear convergence of $\|x^k -
x^*\|$, provided that the sequence $\{ \|e^k\| \}$ and $\{
\sqrt{\epsilon_k}\}$ both decrease linearly to zero.
When our approaches are specialized to proximal gradient ($H_k = LI$),
our analysis shows a Q-linear rate (rather than R-linear) for the
strongly convex case, and applies to the convergence of the objective
value rather than the iterates. Additionally, our results shows
convergence for nonconvex problems.


Variant 2 of Algorithm~\ref{alg:QuasiModifyH} is proposed in
\cite{SchT16a,GhaS16a} for convex and strongly convex objectives, with
inexactness defined additively as in \eqref{eq:additive}. For convex
$f$, \citet{SchT16a} showed that if $\sum_{k=0}^\infty
\epsilon_k/\|H_k\|$ and $\sum_{k=0}^{\infty}
\sqrt{\epsilon_k/\|H_k\|}$ converge then a $1/k$ convergence rate is
achievable. The same rate can be achieved if $\epsilon_k \leq (a/k)^2$
for any $a \in [0,1]$.  When $F$ is $\mu$-strongly convex,
\citet{GhaS16a} showed that if $\sum \epsilon_k / \rho^k$ is finite
(where $\rho = 1 - (\gamma \mu)/(\mu + M)$, $M$ is the upper bound for
$\|H_k\|$, and $\gamma$ is as defined in \eqref{eq:decrease}), then a
global R-linear convergence rate is attained.  In both cases, the
conditions require a certain rate of decrease for $\epsilon_k$, a
condition that can be achieved by performing more and more inner
iterations as $k$ increases.  By contrast, our multiplicative
inexactness criterion \eqref{eq:approx} can be attained with a fixed
number of inner iterations. Moreover, we attain a Q-linear rather than
an R-linear result.


Algorithm~\ref{alg:QuasiArmijo} is also considered in \cite{LeeSS14a},
with $H_k$ set either to $\nabla^2 f(x^k)$ or a BFGS
approximation. Asymptotic convergence and a local rate are shown for
the exact case. For inexact subproblem solutions, local results are
proved under the assumption that the unit step size is always taken
(which may not happen for inexact steps).  A variant of
Algorithm~\ref{alg:QuasiArmijo} with a different step size criterion
is discussed in \cite{ByrNO16a}, for the special case of $\psi(x) =
\|x\|_1$. Inexactness of the subproblem solution is measured by the
norm of a proximal-gradient step for $Q$. By utilizing specific
properties of the $\ell_1$ norm, this paper showed a global
convergence rate on the norm of the proximal gradient step on $F$ to
zero, without requiring convexity of $f$ --- a result similar to our
nonconvex result.  However, the extension of their result to general
$\psi$ is not obvious and, moreover, our inexactness condition avoids
the cost of computing the proximal gradient step on $Q$.  When $H_k$
is $\nabla^2 f(x^k)$ or a BFGS approximation, they obtain for the
inexact version local convergence results similar to the exact case
proved in \cite{LeeSS14a}.

For the case in which $f$ is convex, thrice continuously
differentiable, and self-concordant, and $\psi$ is the indicator
function of a closed convex set, \citet{TraKC14a} analyzed global and
local convergence rates of inexact damped proximal Newton with a fixed
step size.  The paper \citet{LiAV17a} extends this convergence
analysis to general convex $\psi$.
However, generalization of these results beyond the case of $H_k =
\nabla^2 f(x^k)$ and self-concordant $f$ is not obvious.

Accelerated inexact proximal gradient is discussed in
\cite{SchRB11a,VilSBV13a} for convex $f$ to obtain an improved
$O(1/k^2)$ convergence rate.  The work \cite{JiaST12a} considers
acceleration with more general choices of $H$ under the requirement
$H_k \succeq H_{k+1}$ for all $k$, which precludes many interesting
choices of $H_k$.  This requirement is relaxed by \cite{GhaS16a} to
$\theta_k H_k \succeq \theta_{k+1} H_{k+1}$ for scalars $\theta_k$
that are used to decide the extrapolation step size.  However, as
shown in the experiment in \cite{GhaS16a}, extrapolation may not
accelerate the algorithm.  Our analysis does not include acceleration
using extrapolation steps, but by combining with the Catalyst
framework \citep{LinMH15a}, similar improved rates could be attained.

\subsection{Outline: Remainder of the Paper}

In Section~\ref{sec:preliminaries}, we introduce notation and prove
some preliminary results.  Convergence analysis appears in
Section~\ref{sec:analysis} for Algorithms~\ref{alg:QuasiArmijo} and
\ref{alg:QuasiModifyH}, covering both convex and nonconvex problems.
Some interesting and practical choices of $H_k$ are discussed in
Section~\ref{sec:H} to show that our framework includes many existing
algorithms.  We provide some preliminary numerical results in
Section~\ref{sec:exp}, and make some final comments in
Section~\ref{sec:conclusions}.

\section{Notations and Preliminaries}
\label{sec:preliminaries}

The norm $\|\cdot\|$, when applied on vectors, denotes the Euclidean
norm. When applied to a symmetric matrix $A$, it denotes the
corresponding induced norm, which is equivalent to the spectral radius
of $A$.  For any symmetric matrix $A$, $\lambdamin(A)$ denotes its
smallest eigenvalue.  For any two symmetric matrices $A$ and $B$,
$A\succeq B$ (respectively $A \succ B$) denotes that $A-B$ is positive
semidefinite (respectively positive definite).  For our nonsmooth
function $F$, $\partial F$ denotes the set of generalized gradient
defined as
\begin{equation*}
	\partial F(x) \coloneqq \nabla f(x) + \partial \Psi(x),
\end{equation*}
where $\partial \Psi$ denotes the subdifferential (as $\Psi$ is convex).
When the minimum $F^*$ of $F(x)$ is attainable, we denote the
solution set by $\Omega \coloneqq \left\{x\mid F\left( x \right) =
F^*\right\}$, and define $P_{\Omega}(x)$ as the (Euclidean-norm)
projection of $x$ onto $\Omega$.

In some results, we use a particular strong convexity assumption to
obtain a faster rate. We say that $F$ satisfies the {\em optimal set
  strong convexity} condition with modulus $\mu \ge 0$ if for any $x$
and any $\lambda \in [0,1]$, we have
\begin{align}
F(\lambda x +  (1 - \lambda)
P_{\Omega}(x))
\leq \lambda F\left(x\right) +
\left(1 - \lambda\right) F^* - \frac{\mu\lambda \left( 1 - \lambda
\right)}{2} \left\|x -
P_{\Omega}\left(x\right)\right\|^2.
\label{eq:strong}
\end{align}
This condition does not require the strong convexity to hold globally,
but only between the current point and its projection onto the
solution set.
Examples of functions that are not strongly convex but satisfy
\eqref{eq:strong} include:
\begin{itemize}
\item $F(x) = h(Ax)$ where $h$ is strongly convex, and $A$ is any
	matrix;
\item $F(x) = h(Ax) + \bfone_{X}(x)$, where $X$ is a polyhedron;
\item Squared-hinge loss: $F(x) = \sum \max (0, a_i^T x -b_i)^2$.
\end{itemize}
A similar condition is the ``quasi-strong convexity'' condition
proposed by \citet{NecNG18a}, which always implies \eqref{eq:strong},
and can be implied by optimal set strong convexity if $F$ is
differentiable. However, since we allow $\psi$ (and therefore $F$) to
be nonsmooth, we need a different definition here.

Turning to the subproblem \eqref{eq:quadratic} and the definition of
$\Delta_k$ in \eqref{eq:Delta}, we find a condition for $d$ to be a
descent direction.
\begin{lemma}
\label{lemma:delta}
\modify{
If $\Psi$ is convex and $f$ is differentiable, then $d$ is a descent
direction for $F$ at $x$ if $\Delta < 0$.}
\end{lemma}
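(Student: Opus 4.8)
The plan is to show that moving a small positive distance $\alpha$ along $d$ strictly decreases $F$, which is the operative meaning of ``descent direction'' here (and precisely what is needed for the line search in \eqref{eq:armijo} to make progress). First I would split the change in $F$ into its smooth and nonsmooth contributions,
\begin{equation*}
F(x + \alpha d) - F(x) = \bigl[f(x + \alpha d) - f(x)\bigr] + \bigl[\psi(x + \alpha d) - \psi(x)\bigr],
\end{equation*}
and estimate each bracket separately for $\alpha \in (0,1]$.

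For the smooth term, differentiability of $f$ gives the first-order expansion $f(x + \alpha d) - f(x) = \alpha \nabla f(x)^T d + o(\alpha)$ as $\alpha \to 0^+$. For the nonsmooth term, the key observation is that $x + \alpha d = (1-\alpha) x + \alpha (x + d)$ is a convex combination, so convexity of $\psi$ yields
\begin{equation*}
\psi(x + \alpha d) \le (1-\alpha)\psi(x) + \alpha \psi(x + d),
\quad\text{i.e.,}\quad
\psi(x + \alpha d) - \psi(x) \le \alpha\bigl(\psi(x + d) - \psi(x)\bigr).
\end{equation*}
This is the one nontrivial idea, and it is what lets us handle the nondifferentiable part without taking directional derivatives: it linearizes the regularizer from above along the segment.

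Combining the two estimates and recalling the definition of $\Delta$ in \eqref{eq:Delta},
\begin{equation*}
F(x + \alpha d) - F(x) \le \alpha\bigl[\nabla f(x)^T d + \psi(x+d) - \psi(x)\bigr] + o(\alpha) = \alpha \Delta + o(\alpha).
\end{equation*}
Since $\Delta < 0$ by hypothesis, the dominant term $\alpha\Delta$ is strictly negative, so there exists $\bar\alpha \in (0,1]$ such that $\alpha\Delta + o(\alpha) < 0$ for all $\alpha \in (0,\bar\alpha]$; hence $F(x + \alpha d) < F(x)$ on that interval, establishing that $d$ is a descent direction.

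I do not anticipate a serious obstacle: the only subtlety is ensuring the convexity bound is applied with the correct convex combination and that the $o(\alpha)$ term from $f$ is genuinely dominated by the strictly negative linear term. One should state the conclusion carefully as ``$F$ strictly decreases for all sufficiently small positive $\alpha$'' rather than as a statement about a two-sided directional derivative, since the convexity inequality above is one-sided (valid for $\alpha \in [0,1]$) and that is exactly the direction the line search explores.
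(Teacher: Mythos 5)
Your proof is correct, but it takes a different route from the paper's. The paper argues via the directional derivative: since $f$ is differentiable and $\Psi$ is convex, $F'(x;d) = \nabla f(x)^T d + \max_{\hat s \in \partial \Psi(x)} \hat s^T d$ is well defined, and the subgradient inequality $\Psi(x+d) \ge \Psi(x) + \hat s^T d$ shows $F'(x;d) \le \Delta < 0$. You instead avoid subdifferentials entirely: you expand $f$ to first order, bound $\psi$ along the segment by the convex-combination inequality $\psi(x+\alpha d) \le (1-\alpha)\psi(x) + \alpha\psi(x+d)$, and conclude $F(x+\alpha d) - F(x) \le \alpha\Delta + o(\alpha) < 0$ for all sufficiently small $\alpha > 0$. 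Your argument is more elementary (no appeal to the max formula for the directional derivative of a convex function) and directly establishes the property the line search actually uses, namely strict decrease of $F$ for small positive step sizes; the paper's argument buys the slightly stronger and cleaner statement that the one-sided directional derivative itself is bounded above by $\Delta$, which is what makes the descent-direction terminology precise. The two uses of convexity are equivalent in strength here (the subgradient inequality at $x$ versus Jensen along the segment), so neither proof is more general in any substantive way; your only implicit assumptions, finiteness of $\psi(x)$ and $\psi(x+d)$, are forced by $\Delta < 0$ being meaningful, exactly as in the paper.
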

\begin{proof}
\modify{
We know that $d$ is a descent direction for $F$ at $x$ if the
directional derivative
\begin{equation*}
	F'(x;d) \coloneqq \lim_{\alpha \rightarrow 0}\quad \frac{F(x +
		\alpha d) -
	F(x)}{\alpha}
\end{equation*}
is negative.
Note that since $f$ is differentiable and $\Psi$ is convex,
\[
F'(x;d) = \max_{s \in \partial F(x)} s^T d
= \nabla f(x)^T d + \max_{\hat s \in \partial \Psi(x)} \hat{s}^T d
\]
is well-defined.
Now from the convexity of $\Psi$,
\begin{equation*}
	\Psi(x+d) \geq \Psi(x) + \hat s^T d, \quad \forall \hat s \in \partial
	\Psi(x),
\end{equation*}
so
\begin{equation*}
	\max_{\hat s \in \partial \Psi(x)} \hat s^T d + \nabla f(x)^T d
	\leq \Psi(x+d) - \Psi(x) + \nabla f(x)^T d = \Delta.
\end{equation*}
Therefore, when $\Delta < 0$, the directional derivative is negative
and $d$ is a descent direction.
\qed
}
\end{proof}
The following lemma motivates our algorithms.
\begin{lemma}
\label{lemma:descent}
\modify{If $Q$ and $\Psi$ are convex and $f$ is differentiable}, then
$Q(d) < 0$ implies that $d$ is a descent direction for $F$ at $x$.
\end{lemma}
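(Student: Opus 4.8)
The plan is to prove this directly through directional derivatives, rather than by trying to reduce it to Lemma~\ref{lemma:delta}. The naive route---deduce $\Delta < 0$ from $Q(d) < 0$ and then invoke Lemma~\ref{lemma:delta}---does not work here, because when $H$ is allowed to be indefinite (as the paper explicitly permits, relying on $\psi$ to convexify $Q$) one can have $Q$ convex and $Q(d) < 0$ while $\Delta = Q(d) - \tfrac12 d^T H d > 0$, since the curvature term may be negative. So the sign of $\Delta$ is the wrong handle; I would instead compare the directional derivative of $F$ against the value $Q(d)$ itself.

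First I would record the directional-derivative identity. Since $f$ is differentiable and $\psi$ is convex, the one-sided directional derivative $\psi'(x;d) = \lim_{\lambda \downarrow 0}(\psi(x+\lambda d)-\psi(x))/\lambda$ exists, and exactly as in the proof of Lemma~\ref{lemma:delta} we have
\begin{equation*}
F'(x;d) = \nabla f(x)^T d + \psi'(x;d).
\end{equation*}
Evaluating $Q$ along the ray $\lambda \mapsto Q(\lambda d)$ and differentiating at $\lambda = 0^+$, the quadratic term $\tfrac{\lambda^2}{2}d^T H d$ contributes nothing to first order, so the directional derivative of $Q$ at the origin is
\begin{equation*}
Q'(0;d) = \nabla f(x)^T d + \psi'(x;d) = F'(x;d).
\end{equation*}

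The key step is then to use convexity of $Q$. Setting $g(\lambda) \coloneqq Q(\lambda d)$, the function $g$ is convex on $[0,1]$ with $g(0) = Q(0) = 0$ and $g(1) = Q(d)$, so the standard convex inequality $g'(0^+) \le g(1) - g(0)$ yields $Q'(0;d) \le Q(d)$. Combining with the identity above gives $F'(x;d) = Q'(0;d) \le Q(d) < 0$, whence $d$ is a descent direction for $F$ at $x$.

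The main obstacle is conceptual rather than computational: recognizing that the convexity of $Q$ (and not the sign of the individual quantity $\Delta$) is what must be exploited, and that the correct comparison is $F'(x;d) \le Q(d)$. The only technical care needed is in justifying the directional-derivative identity---specifically that the quadratic term has vanishing first-order contribution at the origin, and that $\psi'(x;d)$ exists and equals the support function $\max_{\hat s \in \partial\psi(x)}\hat s^T d$ already used in Lemma~\ref{lemma:delta}---together with the one-sided derivative bound for the convex scalar function $g$.
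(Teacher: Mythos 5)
Your proposal is correct and rests on essentially the same mechanism as the paper's proof: convexity of $Q$ together with $Q(0)=0$ gives $Q(\lambda d)\le \lambda Q(d)<0$, and the quadratic term $\tfrac{\lambda^2}{2}d^THd$ is negligible to first order near the origin. The paper packages this by noting that the $\Delta$-quantity associated with the scaled step $\lambda d$ is negative for all sufficiently small $\lambda$, invoking Lemma~\ref{lemma:delta} at $\lambda d$, and using scale-invariance of the descent property, whereas you pass to the limit and obtain $F'(x;d)\le Q(d)<0$ directly---a cosmetic rather than substantive difference, and your observation that $\Delta$ itself need not be negative when $H$ is indefinite is accurate.
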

\begin{proof}
Note that $Q(0) = 0$.  Therefore, if $Q$ is convex, we have
\begin{equation*}
\lambda \nabla f\left(x\right)^T d + \frac{\lambda^2}{2} d^T H d +
\psi\left(x + \lambda d\right) - \psi\left(x\right) = Q^x_H\left(\lambda
d\right) \leq \lambda Q^x_H\left(d\right) < 0,
\end{equation*}
for all $\lambda \in (0,1]$.  It follows that $\nabla f(x)^T (\lambda
d) + \psi(x + \lambda d) - \psi(x) < 0$ for all sufficiently small
$\lambda$.  Therefore, from Lemma~\ref{lemma:delta}, $\lambda d$ is a
descent direction, and since $d$ and $\lambda d$ only differ in their
lengths, so is $d$.
	\qed
\end{proof}

Positive semidefiniteness of $H$ suffices to ensure convexity of $Q$.
However, Lemma~\ref{lemma:descent} may be used even when $H$ has
negative eigenvalues, as $\psi$ may have a strong convexity property
that ensures convexity of $Q$.  Lemma~\ref{lemma:descent} then
suggests that no matter how coarse the approximate solution of
\eqref{eq:quadratic} is, as long as it is better than $d=0$ for a
convex $Q$, it results in a descent direction. This fact implies
finite termination of the backtracking line search procedure in
Algorithm~\ref{alg:QuasiArmijo}.

\section{Convergence Analysis}
\label{sec:analysis}

We start our analysis for both algorithms by showing finite
termination of the line search procedures.  We then discuss separately
three classes of problems involving different assumptions on $F$,
namely, that $F$ is convex, that $F$ satisfies optimal set strong
convexity \eqref{eq:strong}, and that $F$ is nonconvex. Different
iteration complexities are proved in each case.
\modify{
The following condition is assumed throughout our analysis in this section.
\begin{assumption}
	\label{assum:general}
	In \eqref{eq:f}, $f$ is $L$-Lipschitz-continuously differentiable
	for some $L > 0$;
$\psi$ is convex, extended-valued, proper, and closed;
$F$ is lower-bounded; and the solution set $\Omega$ of \eqref{eq:f} is
nonempty.
\end{assumption}}

\subsection{Line Search Iteration Bound}
\label{subsec:linesearch}

We show that the line search procedures have finite termination.  The
following lemma for the backtracking line search in
Algorithm~\ref{alg:QuasiArmijo} does not require $H$ to be positive
definite, though it does require strong convexity of $Q$
\eqref{eq:quadratic}.
\begin{lemma}
\label{lemma:delta_d}
\modify{If Assumption \ref{assum:general} holds,} $Q$ is $\sigma$-strongly
convex for some $\sigma > 0$, and the approximate solution $d$ to
\eqref{eq:quadratic} satisfies \eqref{eq:approx} for some
$\eta < 1$, then for $\Delta$ defined in \eqref{eq:Delta}, we have
\begin{align}
	\nonumber
\Delta &\leq -\frac12 \left(\frac{1 - \sqrt{\eta}}{
	1 + \sqrt{\eta}} \sigma \left\|d\right\|^2 + d^T H d\right)\\
&\leq -\frac12 \left(\frac{1 - \sqrt{\eta}}{
	1 + \sqrt{\eta}} \sigma + \lambdamin\left( H \right) \right)
	\left\|d\right\|^2.
\label{eq:deltabound}
\end{align}
Moreover, if
\begin{equation*}
(1 - \sqrt{\eta})\sigma + (1 + \sqrt{\eta}) \lambdamin(H) > 0,
\end{equation*}
then the backtracking line search procedure in
Algorithm~\ref{alg:QuasiArmijo} terminates in finite steps and
produces a step size $\alpha$ that satisfies the following lower
bound:
\begin{equation}
\alpha \geq \min\left\{1, \beta\left(1 - \gamma\right) \frac{
	\left(1 - \sqrt{\eta}\right)\sigma + \left( 1 + \sqrt{\eta}
	\right)\lambdamin\left( H \right)}{L\left(1 + \sqrt{
	\eta } \right) } \right\}.
\label{eq:linesearchbound}
\end{equation}
\end{lemma}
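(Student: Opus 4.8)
The plan is to treat the two claims separately: first the two-sided bound \eqref{eq:deltabound} on $\Delta$, and then the finite termination and step-size estimate \eqref{eq:linesearchbound}. For the bound on $\Delta$ I would begin with the algebraic identity $\Delta = Q(d) - \tfrac12 d^T H d$, which is immediate from \eqref{eq:Delta} and the definition of $Q$ in \eqref{eq:quadratic}. Since $Q$ is $\sigma$-strongly convex with $\sigma > 0$, it has a unique minimizer $d^*$ with $Q(d^*) = Q^*$, and strong convexity supplies the quadratic lower bound $Q(z) \ge Q^* + \tfrac{\sigma}{2}\|z - d^*\|^2$ for all $z$.

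Two specializations of this lower bound drive the argument. Taking $z = 0$ and recalling $Q(0)=0$ (so that $Q^* \le 0$) gives $\|d^*\|^2 \le -2Q^*/\sigma$. Taking $z = d$ and invoking the inexactness condition \eqref{eq:approx} in the form $Q(d) \le (1-\eta)Q^*$ gives $\tfrac{\sigma}{2}\|d - d^*\|^2 \le (1-\eta)Q^* - Q^* = -\eta Q^*$, hence $\|d - d^*\|^2 \le -2\eta Q^*/\sigma$. The key step, and the one I expect to require the most care, is combining these two estimates through the triangle inequality $\|d\| \le \|d^*\| + \|d - d^*\|$ to obtain $\|d\| \le (1 + \sqrt{\eta})\sqrt{-2Q^*/\sigma}$, which rearranges to $-Q^* \ge \sigma\|d\|^2 / (2(1+\sqrt{\eta})^2)$. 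This is precisely where the $\sqrt{\eta}$ terms, and eventually the factor $(1-\sqrt{\eta})/(1+\sqrt{\eta})$, are produced.

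It then remains to feed this back in: using $Q(d) \le (1-\eta)Q^*$ together with the factorization $1 - \eta = (1-\sqrt{\eta})(1+\sqrt{\eta})$ yields $Q(d) \le -\tfrac12\tfrac{1-\sqrt{\eta}}{1+\sqrt{\eta}}\sigma\|d\|^2$, so that $\Delta = Q(d) - \tfrac12 d^T H d$ satisfies the first line of \eqref{eq:deltabound}; the second line is then obtained from $d^T H d \ge \lambdamin(H)\|d\|^2$.

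For the line-search statement I would invoke the descent lemma for the $L$-Lipschitz gradient of $f$ together with convexity of $\psi$ (via $\psi(x+\alpha d) \le \alpha\psi(x+d) + (1-\alpha)\psi(x)$) to obtain, for every $\alpha \in [0,1]$, the standard estimate $F(x + \alpha d) \le F(x) + \alpha\Delta + \tfrac{L\alpha^2}{2}\|d\|^2$. Comparing this with the Armijo target \eqref{eq:armijo} shows that sufficient decrease holds whenever $\alpha \le -2(1-\gamma)\Delta/(L\|d\|^2)$; substituting the second line of \eqref{eq:deltabound} replaces this by the explicit threshold $\tilde\alpha \coloneqq (1-\gamma)\big((1-\sqrt{\eta})\sigma + (1+\sqrt{\eta})\lambdamin(H)\big)/\big(L(1+\sqrt{\eta})\big)$, and the hypothesis $(1-\sqrt{\eta})\sigma + (1+\sqrt{\eta})\lambdamin(H) > 0$ guarantees $\tilde\alpha > 0$. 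Finite termination is then immediate, since backtracking reaches an accepted value after finitely many reductions. Finally, the bound \eqref{eq:linesearchbound} follows from the usual backtracking argument: either $\alpha = 1$ is accepted, or the last rejected trial $\beta^{i-1} \le 1$ must exceed $\tilde\alpha$, so the accepted value $\alpha = \beta^{i} = \beta\,\beta^{i-1}$ is at least $\beta\tilde\alpha$; taking the minimum with $1$ delivers the stated lower bound.
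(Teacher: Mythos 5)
Your proof is correct, but the route to \eqref{eq:deltabound} is genuinely different from the paper's. You work with the minimizer $d^*$ of the strongly convex $Q$: quadratic growth at $z=0$ and $z=d$ gives $\|d^*\| \le \sqrt{-2Q^*/\sigma}$ and $\|d-d^*\| \le \sqrt{-2\eta Q^*/\sigma}$, the triangle inequality produces the $(1+\sqrt{\eta})$ factor, and then $\Delta = Q(d)-\tfrac12 d^T H d$ together with $1-\eta=(1-\sqrt{\eta})(1+\sqrt{\eta})$ finishes the bound. The paper never refers to $d^*$: it evaluates $Q$ at the convex combinations $\lambda d$, applies the strong-convexity interpolation inequality between $0$ and $d$ combined with \eqref{eq:approx}, and then makes the specific choice $\lambda=(1-\sqrt{\eta})/(1-\eta)$, which yields the identical constant. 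Your version is arguably more geometric (the $\sqrt{\eta}$ arises from distances to the minimizer), but it implicitly uses attainment of $Q^*$, which you should justify in a line (it holds because $Q$ is proper, closed, and strongly convex, hence has a unique minimizer); the paper's argument needs only $Q^* \le Q(\lambda d)$, so it sidesteps attainment entirely. You should also note the trivial case $d=0$ (then $\Delta=0$ and $\alpha=1$ is accepted), which both arguments gloss over. The second half of your proof --- the estimate $F(x+\alpha d)\le F(x)+\alpha\Delta+\tfrac{L\alpha^2}{2}\|d\|^2$ from $L$-Lipschitzness of $\nabla f$ and convexity of $\psi$, the acceptance threshold $\tilde\alpha$, positivity of $\tilde\alpha$ under the stated hypothesis, and the backtracking undershoot factor $\beta$ --- is essentially the same as the paper's.
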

\begin{proof}
From \eqref{eq:approx} and strong convexity of $Q$, we have
that for any $\lambda \in [0,1]$,
\begin{align}
\frac{1}{1 - \eta}\left(Q\left(0\right) - Q\left(d\right)\right) &
\geq Q\left(0\right) - Q^*\nonumber\\
& \geq Q\left(0\right) - Q\left(\lambda d\right)
\label{eq:ls1}
\\
& \geq Q\left(0\right) - \left(\lambda Q\left(d\right) + \left(1 -
\lambda\right) Q\left(0\right) - \frac{\sigma \lambda \left( 1 -
\lambda \right)}{2} \left\| d \right\|^2 \right).
\nonumber
\end{align}
Since $Q(0) = 0$, we obtain by substituting from the definition of $Q$
that
\begin{align*}
&~\frac{1}{1 - \eta} \left( \nabla f\left(x\right)^T d + \frac{1}{2} d^T H d +
	\psi\left(x + d\right) - \psi\left(x\right) \right)
\\
\leq &~\lambda \left(\nabla f\left(x\right)^T d + \frac{1}{2}d^T H d +
	\psi\left(x+d\right) - \psi\left(x\right)\right) - \frac{\sigma \lambda
	\left(1 - \lambda\right)}{2} \left\|d\right\|^2.
\end{align*}
Since $1/(1-\eta) \geq 1 \geq \lambda$, we have
\begin{align}
	\nonumber
\left(\frac{1}{1 - \eta} - \lambda\right) \Delta
&\leq -\frac{\sigma \lambda \left(1 - \lambda \right) }{2} \left\| d
	\right\|^2 + \frac12 \left( \lambda - \frac{1}{1 - \eta} \right) d^T H d\\
&\leq -\left(\frac{\sigma \lambda \left(1 - \lambda \right) }{2}
	+ \frac12 \left(\frac{1}{1 - \eta} - \lambda \right) \lambdamin\left( H
	\right)\right) \left\| d \right\|^2.
\label{eq:deltaintermediate}
\end{align}
It follows immediately that the following bound holds for any $\lambda
\in [0,1]$:
\begin{equation*}
\Delta
\leq -\frac12 \left(\frac{\sigma \lambda \left(1 - \lambda \right) }{
\left(\frac{1}{1-\eta} - \lambda \right)} + \lambdamin \left( H \right)
\right) \left\| d \right\|^2.
\end{equation*}
We make the following specific choice of $\lambda$:
\begin{equation}
\lambda = \frac{1-\sqrt{\eta}}{1-\eta} \in (0,1].
\label{eq:lambda}
\end{equation}
for which
\[
1-\lambda = \sqrt{\eta} \lambda, \quad
\frac{1}{1-\eta} - \lambda = \frac{\sqrt{\eta}}{1-\eta}.
\]
The result \eqref{eq:deltabound} follows by substituting these
identities into \eqref{eq:deltaintermediate}.


\modify{ If the right-hand side of \eqref{eq:deltabound} is negative,
  then we have from the Lipschitz continuity of $\nabla f$, the
  convexity of $\psi$, and the mean value theorem that the following
  relationships are true for all $\alpha \in [0,1]$:
\begin{align*}
\nonumber
&~F\left(x + \alpha d\right) - F\left(x\right)\\
=&~ f\left(x + \alpha d \right) - f\left(x \right) + \psi\left(x +
\alpha d \right) - \psi\left(x\right) \nonumber\\
\leq &~ \alpha \nabla f\left(x\right)^T d - \alpha \left(\psi\left(x\right)
	- \psi\left(x+d\right)\right) + \alpha \int_0^1  \left( \nabla
	f\left(x+t\alpha d\right) - \nabla f\left(x\right)\right)^T d \, dt\nonumber\\
\leq&~ \alpha \Delta + \frac{L \alpha^2}{2} \left\|d\right\|^2\\
\leq&~ \alpha \Delta - \frac{L \alpha^2(1 + \sqrt{\eta})}{\left(1 -
	\sqrt{\eta}\right)\sigma  + \left( 1 + \sqrt{\eta}
	\right)\lambdamin\left( H \right)}\Delta.
\end{align*}
}
Therefore,
\eqref{eq:armijo} is satisfied if
\begin{equation*}
\alpha \Delta -
\frac{L \alpha^2(1 + \sqrt{\eta})}{ \left(1 - \sqrt{\eta} \right)
	\sigma + \left( 1 + \sqrt{\eta} \right)\lambdamin\left( H
	\right)}\Delta
\leq \alpha \gamma \Delta.
\end{equation*}
We thus get that \eqref{eq:armijo} holds whenever
\begin{equation*}
\alpha \leq (1-\gamma) \frac{\left(1 -
	\sqrt{\eta}\right) \sigma + \left( 1 + \sqrt{\eta}
	\right)\lambdamin\left( H \right)}{L\left(1 +
	\sqrt{\eta}\right)}.
\end{equation*}
This leads to \eqref{eq:linesearchbound}, when we introduce a factor
$\beta$ to account for possible undershoot of the backtracking
procedure.
	\qed
\end{proof}


Note that Lemma~\ref{lemma:delta_d} allows indefinite $H$, and
suggests that we can still obtain a certain amount of objective
decrease as long as $\lambdamin(H)$ is not too negative in comparison
to the strong convexity parameter of $Q$.  When the strong convexity
of $Q$ is accounted for completely by the quadratic part (that is,
$\lambdamin(H) = \sigma>0$) we have the following simplification of
Lemma~\ref{lemma:delta_d}.
\begin{corollary}
\label{lemma:delta_d2}
\modify{If Assumption \ref{assum:general} holds}, $\lambdamin(H) = \sigma > 0$, 
and the approximate solution $d$ to \eqref{eq:quadratic} satisfies
\eqref{eq:approx} for some $\eta < 1$, we have
\begin{equation}
\Delta \leq -\frac{1}{1 + \sqrt{\eta}} d^T H d \leq
	-\frac{\sigma}{1 + \sqrt{\eta}} \left\|d\right\|^2.
\label{eq:deltabound2}
\end{equation}
Moreover, the backtracking line search procedure in
Algorithm~\ref{alg:QuasiArmijo} terminates in finite steps and
produces a step size that satisfies the following lower bound:
\begin{equation}
\alpha \geq \bar \alpha \coloneqq \min\left\{1,
	\frac{2\beta\left(1 - \gamma\right) \sigma}{L\left(1 +
		\sqrt{\eta}\right)}\right\}.
\label{eq:linesearchbound2}
\end{equation}
\end{corollary}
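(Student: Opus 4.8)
The plan is to recognize that this corollary is \emph{almost} a direct specialization of Lemma~\ref{lemma:delta_d} with $\sigma = \lambdamin(H)$, but that the intermediate bound $\Delta \le -\frac{1}{1+\sqrt{\eta}}\, d^T H d$ in \eqref{eq:deltabound2} is genuinely sharper than what \eqref{eq:deltabound} yields and so requires a small refinement. Indeed, substituting $\lambdamin(H) = \sigma$ into the second line of \eqref{eq:deltabound} already gives $\Delta \le -\frac{\sigma}{1+\sqrt{\eta}}\|d\|^2$, which is the rightmost inequality of \eqref{eq:deltabound2}; and substituting $\lambdamin(H)=\sigma$ into \eqref{eq:linesearchbound} collapses the numerator $(1-\sqrt{\eta})\sigma + (1+\sqrt{\eta})\lambdamin(H)$ to $2\sigma$, reproducing the step-size bound \eqref{eq:linesearchbound2}. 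So those two claims need no new work; only the middle inequality $\Delta \le -\frac{1}{1+\sqrt{\eta}} d^T H d$ must be established separately, since it cannot be recovered from \eqref{eq:deltabound} (that would require $\sigma\|d\|^2 \ge d^T H d$, the wrong direction).

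To obtain the sharper middle inequality, I would retrace the proof of Lemma~\ref{lemma:delta_d} but replace the generic strong-convexity estimate used in \eqref{eq:ls1} by one that exploits the quadratic structure of $Q$. Specifically, convexity of $\psi$ gives $\psi(x+\lambda d) - \psi(x) \le \lambda(\psi(x+d)-\psi(x))$ for $\lambda \in [0,1]$, and hence $Q(\lambda d) \le \lambda \Delta + \tfrac{\lambda^2}{2} d^T H d = \lambda Q(d) - \tfrac{\lambda(1-\lambda)}{2} d^T H d$. This is tighter than the bound $Q(\lambda d) \le \lambda Q(d) - \tfrac{\sigma\lambda(1-\lambda)}{2}\|d\|^2$ used in the lemma, precisely because $d^T H d \ge \sigma\|d\|^2$. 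Combining $Q^* \le Q(\lambda d)$ with \eqref{eq:approx} in the form $Q(d) \le (1-\eta) Q^*$ yields $\Delta + \tfrac12 d^T H d \le (1-\eta)\bigl(\lambda\Delta + \tfrac{\lambda^2}{2} d^T H d\bigr)$, and collecting the $\Delta$ terms gives $\Delta \le -\tfrac12\,\frac{1-(1-\eta)\lambda^2}{1-(1-\eta)\lambda}\, d^T H d$ for any $\lambda$ with $(1-\eta)\lambda < 1$.

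The final step is to pick the multiplier that makes this coefficient equal $\frac{2}{1+\sqrt{\eta}}$; the choice $\lambda = 1/(1+\sqrt{\eta})$ (equivalently the $\lambda$ of \eqref{eq:lambda}) does this, since then $(1-\eta)\lambda = 1-\sqrt{\eta}$ and $(1-\eta)\lambda^2 = \frac{1-\sqrt{\eta}}{1+\sqrt{\eta}}$, and the ratio simplifies to $\frac{2}{1+\sqrt{\eta}}$. This delivers $\Delta \le -\frac{1}{1+\sqrt{\eta}} d^T H d$, and the rightmost bound then follows from $d^T H d \ge \sigma\|d\|^2$. The step-size bound \eqref{eq:linesearchbound2} can then either be quoted from \eqref{eq:linesearchbound} as noted above, or re-derived by inserting $\|d\|^2 \le -\frac{1+\sqrt{\eta}}{\sigma}\Delta$ into the standard descent estimate $F(x+\alpha d) - F(x) \le \alpha\Delta + \tfrac{L\alpha^2}{2}\|d\|^2$ and comparing with the Armijo test \eqref{eq:armijo}.

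I expect the only real subtlety to be the degenerate case $\eta = 0$, where $\lambda = 1$ makes the denominator $1-(1-\eta)\lambda$ vanish; there I would argue by letting $\lambda \uparrow 1$, using the exact-solution identity $Q(d) = Q^* \le \lambda\Delta + \tfrac{\lambda^2}{2}d^T H d$ to obtain the limiting bound $\Delta \le -d^T H d$ directly. Everything else is the same bookkeeping as in Lemma~\ref{lemma:delta_d}; the one genuinely new idea is that replacing $\sigma\|d\|^2$ by $d^T H d$ in the convexity estimate is legitimate here because all of the strong convexity of $Q$ is carried by $H$.
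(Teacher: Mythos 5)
Your proposal is correct and follows essentially the same route as the paper: the paper's own proof also derives the middle inequality by replacing the strong-convexity estimate in \eqref{eq:ls1} with the convexity bound $\psi(x+\lambda d)-\psi(x)\le\lambda(\psi(x+d)-\psi(x))$, rearranges to the analogue of your $(1-(1-\eta)\lambda)\,\Delta \le -\tfrac12(1-(1-\eta)\lambda^2)\,d^THd$ (its \eqref{eq:delta_intermediate}), plugs in the same $\lambda$ from \eqref{eq:lambda}, and obtains \eqref{eq:linesearchbound2} by substituting $\sigma=\lambdamin(H)$ into \eqref{eq:linesearchbound}. Your explicit limiting treatment of the degenerate case $\eta=0$ is a small extra care the paper glosses over, but it does not change the argument.
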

\begin{proof}
Following \eqref{eq:ls1}, we have from convexity of $\psi$ for any
$\lambda \in [0,1]$ that
\begin{align*}
&~\frac{1}{1 - \eta} \left( \nabla f\left(x\right)^T d + \frac{1}{2}d^T H d +
	\psi\left(x + d\right) - \psi\left(x\right) \right)\\
\leq&~ \lambda \left(\nabla f\left(x\right)^T d + \frac{\lambda}{2}d^T H
	d + \psi\left(x+d\right) - \psi\left(x\right)\right).
\end{align*}
Therefore,
\begin{equation}
\left(\frac{1}{1 - \eta} - \lambda\right) \Delta
\leq \left(\lambda^2 - \frac{1}{1 - \eta}\right)\frac{1}{2} d^T H d.
\label{eq:delta_intermediate}
\end{equation}
Using \eqref{eq:lambda} in \eqref{eq:delta_intermediate}, we obtain
\eqref{eq:deltabound2}.  The bound \eqref{eq:linesearchbound2} follows
by substituting $\sigma = \lambdamin(H)$ into \eqref{eq:linesearchbound}.
\qed
\end{proof}

Note that the first inequality in \eqref{eq:deltabound} and the second
inequality in \eqref{eq:deltabound2} make use of the pessimistic lower
bound $d^THd \ge \lambdamin(H) \|d\|^2$, 
in practice, we observe (see Section~\ref{sec:exp}) that the unit step
$\alpha_k=1$ is often accepted in practice (significantly larger than
the lower bounds \eqref{eq:linesearchbound} and
\eqref{eq:linesearchbound2}) when $H_k$ is the actual Hessian
$\nabla^2 f(x^k)$ or its quasi-Newton approximation.


Next we consider Algorithm~\ref{alg:QuasiModifyH}.
\begin{lemma}
\label{lemma:Hbound}
\modify{If Assumption \ref{assum:general} holds}, $Q$ is $\sigma$-strongly convex for some $\sigma > 0$, and $d$ is
an approximate solution to \eqref{eq:quadratic} satisfying
\eqref{eq:approx} for some $\eta \in [0,1)$, then \eqref{eq:decrease}
  is satisfied if
\begin{equation}
\left( 1 - \gamma \right)  \frac{1 - \sqrt{\eta}}{1 +
	\sqrt{\eta}} \sigma  + \lambdamin(H)  \geq L.
\label{eq:Hbound}
\end{equation}
Therefore, in Algorithm~\ref{alg:QuasiModifyH}, if the initial $H^0_k$ satisfies
\begin{equation}
m_0 I \preceq H^0_k \preceq M_0 I
\label{eq:H0}
\end{equation}
for some $M_0 > 0$, $m_0 \leq M_0$, then for Variant 2, the final
$H_k$ satisfies
\begin{equation}
\|H_k\| \leq \tilde{M}_2(\eta) \coloneqq M_0 + \max\left\{1,
	\frac{1}{\beta}\left(\frac{L \left( 1 + \sqrt{\eta} \right)}{2 -
		\gamma \left( 1 - \sqrt{\eta} \right)} - m_0\right) \right\}.
\label{eq:m2}
\end{equation}
For Variant 1, if we assume in addition that $m_0>0$, we have
\begin{equation}
\|H_k\| \leq \tilde{M}_1(\eta) \coloneqq M_0 \max\left\{1, \frac{L \left( 1 +
	\sqrt{\eta}\right)}{\beta \left(2 - \gamma \left( 1 - \sqrt{\eta}
\right)\right)m_0}\right\}.
\label{eq:m1}
\end{equation}
\end{lemma}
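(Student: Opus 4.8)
The plan is to split the argument into two independent parts: first showing that \eqref{eq:Hbound} is a sufficient condition for the sufficient-decrease test \eqref{eq:decrease}, and then using that fact to count how far each variant's modification loop can inflate $H_k$. For the first part I would reuse the upper bound already derived in the proof of Lemma~\ref{lemma:delta_d}: Algorithm~\ref{alg:QuasiModifyH} always takes a unit step ($x^{k+1}=x^k+d^k$), so specializing that chain of inequalities to $\alpha=1$ gives, from Lipschitz continuity of $\nabla f$, convexity of $\psi$, and the mean value theorem,
\[
F(x+d) - F(x) \le \Delta + \tfrac{L}{2}\|d\|^2.
\]
Since $Q(d)=\Delta+\tfrac12 d^T H d$, the test \eqref{eq:decrease}, i.e.\ $F(x)-F(x+d)\ge -\gamma Q(d)$, is implied by
\[
-\Delta - \tfrac{L}{2}\|d\|^2 \ge -\gamma\Delta - \tfrac{\gamma}{2}d^T H d,
\]
which rearranges to $-(1-\gamma)\Delta \ge \tfrac{L}{2}\|d\|^2 - \tfrac{\gamma}{2}d^T H d$. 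I would then substitute the lower bound on $-\Delta$ furnished by \eqref{eq:deltabound} and use $d^T H d\ge \lambdamin(H)\|d\|^2$; after cancelling the common factor $\tfrac12\|d\|^2$, the inequality collapses exactly to \eqref{eq:Hbound}. The companion requirement $-\gamma Q(d)\ge 0$ in \eqref{eq:decrease} follows from $Q(d)<0$, itself guaranteed by strong convexity of $Q$ together with $Q(0)=0$. This part is routine once the $\alpha=1$ specialization and \eqref{eq:deltabound} are in hand.

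For the second part the key simplification is to bound $\sigma$ from below by $\lambdamin(H)$, which is legitimate since $\psi$ is convex and hence the strong-convexity modulus of $Q$ is at least that of its quadratic part. Because the coefficient of $\sigma$ in \eqref{eq:Hbound} is nonnegative, this substitution yields the cleaner sufficient condition $\lambdamin(H)\,\frac{2-\gamma(1-\sqrt{\eta})}{1+\sqrt{\eta}} \ge L$, that is, $\lambdamin(H)\ge\rho$ where $\rho \coloneqq \frac{L(1+\sqrt{\eta})}{2-\gamma(1-\sqrt{\eta})}$. For Variant~2 the matrix after adding the identity is $H=H^0_k+cI$, so $\lambdamin(H)=\lambdamin(H^0_k)+c\ge m_0+c$, and the loop is certain to stop once $c\ge \rho-m_0$. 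The values of $c$ tried form the geometric sequence $1,\beta^{-1},\beta^{-2},\dots$, so the first one reaching the threshold overshoots it by at most a factor $\beta^{-1}$; the terminating $c$ is therefore at most $\max\{1,\beta^{-1}(\rho-m_0)\}$, and $\|H_k\|\le M_0+c$ produces exactly \eqref{eq:m2}.

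Variant~1 is handled analogously but multiplicatively: after the $j$-th modification $H=\beta^{-j}H^0_k$, so (using $m_0>0$) $\lambdamin(H)\ge \beta^{-j}m_0$, and the threshold $\lambdamin(H)\ge\rho$ is guaranteed once $\beta^{-j}\ge \rho/m_0$. The same geometric-overshoot argument bounds the terminating scale factor by $\max\{1,\beta^{-1}\rho/m_0\}$, and $\|H_k\|=\beta^{-j}\|H^0_k\|\le M_0\beta^{-j}$ gives \eqref{eq:m1}. I expect the main obstacle to be conceptual rather than computational: one must read off each variant's modification schedule correctly (being careful about whether $\alpha_k$ or $H_k$ is updated first inside the while loop) and justify that replacing the true $\sigma$ by $\lambdamin(H)$ moves the test in the safe direction, so the derived threshold genuinely is a worst case. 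A secondary point of care is the norm bookkeeping $\|H^0_k+cI\|\le M_0+c$ and $\|H_k\|=\beta^{-j}\|H^0_k\|$, which rely on the terminating matrix having positive smallest eigenvalue — this holds because at termination $\lambdamin(H)\ge\rho>0$.
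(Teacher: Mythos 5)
Your argument is correct and takes essentially the same route as the paper's proof: the unit-step Lipschitz estimate combined with the first inequality of \eqref{eq:deltabound} and $d^THd\ge\lambdamin(H)\|d\|^2$ gives sufficiency of \eqref{eq:Hbound}, the bound $\sigma\ge\lambdamin(H)$ (from convexity of $\psi$) yields the threshold $\lambdamin(H)\ge L(1+\sqrt{\eta})/\left(2-\gamma(1-\sqrt{\eta})\right)$, and the geometric overshoot factor $\beta^{-1}$ in each variant's modification schedule produces exactly \eqref{eq:m1} and \eqref{eq:m2}. The only cosmetic slip is your parenthetical claim that the terminating matrix necessarily satisfies $\lambdamin(H)\ge\rho$ --- the loop can exit earlier because \eqref{eq:Hbound} is sufficient, not necessary --- but in that case the stated norm bounds hold anyway (for Variant 2 one checks $\|H_k^0+cI\|\le\max\{|m_0|,M_0+c\}\le\tilde M_2(\eta)$), and the paper's own proof is equally informal on this point.
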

\begin{proof}
	From Lipschitz continuity of $\nabla f$, we have that
\begin{align}
\nonumber
&~F\left(x\right) - F\left(x+d\right) + \gamma Q^x_H\left(d\right) \\
\nonumber
= &~f\left(x\right) - f\left(x+d\right) + \gamma \nabla
f\left(x\right)^T d + \frac{\gamma}{2} d^T H d +  \left(1 -
\gamma\right)\left(\psi\left(x\right) - \psi\left(x+d\right)\right)\\
\nonumber
\geq&~ \left(\gamma - 1\right) \nabla f\left(x\right)^T d -
	\frac{L}{2} \|d\|^2 + \frac{\gamma}{2}d^T H d +\left(1 -
	\gamma\right) \left(\psi\left(x\right) - \psi\left(x+d\right)\right)\\
\label{eq:Hbound1}
=&~ (\gamma - 1) \Delta - \frac{L}{2} \|d\|^2 + \frac{\gamma}{2} d^T
H d\\
\label{eq:Hbound2}
\geq&~ \frac{1 - \gamma}{2} \left( \frac{1 - \sqrt{\eta}}{1 + \sqrt{\eta}} \sigma \|d\|^2 + d^T H d  \right) -
\frac{L}{2} \|d\|^2
+ \frac{\gamma }{2} d^T H d,
\end{align}
where in \eqref{eq:Hbound1} we used the definition \eqref{eq:Delta},
and in \eqref{eq:Hbound2} we used Lemma~\ref{lemma:delta_d}.  By
noting $d^T H d \geq \lambdamin(H) \|d\|^2$, \eqref{eq:Hbound2} shows
that \eqref{eq:Hbound} implies \eqref{eq:decrease}.

Since $\psi$ is convex, we have that
$\sigma \ge \lambdamin(H)$, so that a sufficient condition for
\eqref{eq:Hbound} is that
\begin{equation*}
\left(\left( 1 - \gamma \right) \frac{1 -
\sqrt{\eta}}{1 + \sqrt{\eta}} + 1 \right) \lambdamin(H) \geq L,
\end{equation*}
which is equivalent to
\begin{equation*}
	\frac{2 - \gamma(1 - \sqrt{\eta})}
	{1 + \sqrt{\eta}} \lambdamin(H) \geq L.
\end{equation*}
Let the coefficient of $\lambdamin(H)$ in the above inequality be
denoted by $C_1$,
this observation suggests that for
Variant 1 the smallest eigenvalue of the final
$H$ is no larger than $L/(C_1 \beta)$, and since the proportion
between the largest and the smallest eigenvalues of $H_k$ remains
unchanged after scaling the whole matrix, we  obtain \eqref{eq:m1}.

For Variant 2, to satisfy $C_1 H \succeq L I$,
the coefficient for $I$ must be at least $L/C_1 - m_0$.  Considering
the overshoot, and that the difference between the largest and the
smallest eigenvalues is fixed after adding a multiple of identity, we
obtain the condition \eqref{eq:m2}.
	\qed
\end{proof}

By noting the simplification from $d^T H d \ge \lambdamin(H) \|d\|^2$,
we rarely observe the worst-case bounds \eqref{eq:m1} or \eqref{eq:m2}
in practice, unless $H^0$ is a multiple of the identity.

\subsection{Iteration Complexity}

Now we turn to the iteration complexity of our algorithms, considering
three different assumptions on $F$: convexity, optimal set strong
convexity, and the general (possibly nonconvex) case.


The following lemma is modified from some intermediate results in
\cite{GhaS16a}, which shows R-linear convergence of Variant 2 of
Algorithm~\ref{alg:QuasiModifyH} for a strongly convex objective when
the inexactness is measured by an additive criterion.
A proof can be found in Appendix~\ref{app:lemmaq}.
\begin{lemma}
\label{lemma:Q}
Let $F^*$ be the optimum of $F$. \modify{If Assumption \ref{assum:general} holds}, $f$ is convex and $F$ is
$\mu$-optimal-set-strongly convex as defined in \eqref{eq:strong} for
some $\mu \geq 0$, then for any given $x$ and $H$, and for all $\lambda \in [0,1]$,
we have
\begin{align}
\nonumber
Q^* \leq&~ \lambda \left(F^* - F\left(x\right)\right) - \frac{\mu
	\lambda \left( 1 - \lambda\right)}{2} \left\|x -
	P_{\Omega}\left(x\right)\right\|^2\\
	\nonumber
	&~\qquad\qquad+\frac{\lambda^2}{2} \left( x
	- P_{\Omega}\left(x\right) \right)^T H \left( x - P_{\Omega}\left(
	x \right) \right)\\
\leq &~ \lambda \left( F^* - F\left( x \right) \right) +
	\frac12 \left\| x - P_{\Omega}\left( x \right)\right\|^2
	\left( \left\|H\right\| \lambda^2 - \mu \lambda \left( 1 - \lambda
	\right) \right),\;
\label{eq:Q}
\end{align}
where $Q^*$ is the optimal objective value of \eqref{eq:quadratic}.
In particular, by setting $\lambda = \mu/(\mu+\|H\|)$ (as in
\cite{GhaS16a}), we have
\begin{equation} \label{eq:Qstrong}
Q^* \le \frac{\mu}{\mu+\|H\|} (F^*-F(x)).
\end{equation}
\end{lemma}
Note that we allow $\mu = 0$ in Lemma~\ref{lemma:Q}.


\subsubsection{Sublinear Convergence for General Convex Problems}
\label{sec:scgc}

We start with case of $F$ convex, that is, $\mu = 0$ in the definition
\eqref{eq:strong}. In this case, the first inequality in \eqref{eq:Q}
reduces to
\begin{equation}
Q_k^* \leq \lambda \left(F^* - F\left(x^k\right)\right) + \lambda^2
\frac{\left( x^k- P_{\Omega}\left( x^k \right) \right)^T H_k \left( x^k-
P_{\Omega}\left( x^k \right) \right)}{2},
\label{eq:Qlambda}
\end{equation}
for all $\lambda \in [0,1]$.  We assume the following in this
subsection.
\begin{assumption}
	\label{assum:Q2}
	There exists finite $R_0, M > 0$ such that
\begin{equation}
\sup_{x: F\left( x \right) \leq F\left( x_0
	\right)}\left\|x - P_\Omega(x)\right\| = R_0 < \infty \;\; \mbox{and} \;\;
	\|H_k\| \leq M, \;\; k=0,1,2,\dotsc.
	\label{eq:R0}
\end{equation}
\end{assumption}
Using this assumption, we can bound the second term in
\eqref{eq:Qlambda} by
\begin{equation}
\hat A \coloneqq \sup_k \left( x^k - P_{\Omega}\left( x^k \right) \right)^T
H_k \left( x^k - P_{\Omega}\left( x^k \right) \right) \leq M R_0^2.
\label{eq:A}
\end{equation}
The bound $\hat A \leq M R_0^2$ is quite pessimistic, but we use it
for purposes of comparing with existing works.

The following lemma is inspired by \cite[Lemma 4.4]{Bac15a} but
contains many nontrivial modifications, and will be needed in proving
the convergence rate for general convex problems.  Its proof can be
found in Appendix~\ref{app:lemmaseq}.
\begin{lemma}
\label{lemma:seq}
Assume we have three nonnegative sequences $\{\delta_k\}_{k \geq 0}$,
$\{c_k\}_{k \geq 0}$, and $\{A_k\}_{k \geq 0}$, and a constant $A > 0$
such that for all $k=0,1,2,\dotsc$, and for all $\lambda_k \in [0,1]$, we have
\begin{equation}
	 0 < A_k \leq A, \quad \delta_{k+1} \leq \delta_k +
         c_k\left(-\lambda_k \delta_k +
         \frac{A_k}{2}\lambda_k^2\right).
\label{eq:seq}
\end{equation}
Then for $\delta_k \geq A_k$, we have
\begin{equation}
\delta_{k+1} \leq \left(1 - \frac{c_k}{2}\right) \delta_{k}.
\label{eq:linear}
\end{equation}
In addition, if we define $k_0 \coloneqq \arg\min \{k: \delta_k <
A\}$, then
\begin{equation}
	\label{eq:sub}
	\delta_k \leq \frac{2 A}{\sum_{t=k_0}^{k-1} c_t+2}, \quad
	\mbox{\rm for all $k \ge k_0$.}
\end{equation}
\end{lemma}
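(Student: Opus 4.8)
The plan is to prove the two conclusions separately, since \eqref{eq:linear} holds under the additional hypothesis $\delta_k \geq A_k$ while \eqref{eq:sub} is the sublinear tail estimate once $\delta_k$ has dropped below $A$. First I would establish the linear decrease \eqref{eq:linear}. Since the recursion \eqref{eq:seq} holds for \emph{all} $\lambda_k \in [0,1]$, the natural move is to optimize the right-hand side over $\lambda_k$. The expression $-\lambda_k \delta_k + (A_k/2)\lambda_k^2$ is a convex quadratic in $\lambda_k$ minimized at $\lambda_k^\star = \delta_k/A_k$. When $\delta_k \geq A_k$ this unconstrained minimizer satisfies $\lambda_k^\star \geq 1$, so the constrained minimum over $[0,1]$ is attained at the boundary $\lambda_k = 1$, giving $-\delta_k + A_k/2$. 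Substituting and using $A_k \leq \delta_k$ yields $\delta_{k+1} \leq \delta_k + c_k(-\delta_k + A_k/2) \leq \delta_k + c_k(-\delta_k + \delta_k/2) = (1 - c_k/2)\delta_k$, which is exactly \eqref{eq:linear}.

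Next I would prove the sublinear bound \eqref{eq:sub} for $k \geq k_0$, where $k_0$ is the first index with $\delta_k < A$. The key observation is that once $\delta_k < A$, the sequence stays below $A$: indeed $-\lambda_k \delta_k + (A_k/2)\lambda_k^2 \leq 0$ for $\lambda_k = \delta_k/A_k \in [0,1]$ (valid now since $\delta_k < A_k$ can fail, so I must be careful here), so a valid choice of $\lambda_k$ keeps $\delta_{k+1} \leq \delta_k < A$. The standard technique is to pass to reciprocals. Choosing the minimizing $\lambda_k = \delta_k / A_k \in [0,1]$ gives $\delta_{k+1} \leq \delta_k - c_k \delta_k^2/(2A_k)$. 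Dividing through by $\delta_k \delta_{k+1} > 0$ and using $\delta_{k+1} \leq \delta_k$ together with $A_k \leq A$, I obtain
\begin{equation*}
\frac{1}{\delta_{k+1}} \geq \frac{1}{\delta_k} + \frac{c_k \delta_k}{2 A_k \delta_{k+1}} \geq \frac{1}{\delta_k} + \frac{c_k}{2A}.
\end{equation*}
Telescoping this from $k_0$ to $k-1$ gives $1/\delta_k \geq 1/\delta_{k_0} + (1/(2A))\sum_{t=k_0}^{k-1} c_t$, and bounding $1/\delta_{k_0} \geq 1/A$ (since $\delta_{k_0} < A$) produces $1/\delta_k \geq (1/(2A))(\sum_{t=k_0}^{k-1} c_t + 2)$, which inverts to \eqref{eq:sub}.

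The main obstacle I anticipate is handling the case analysis around whether $\delta_k$ exceeds or falls below $A_k$ versus $A$, and in particular making the choice $\lambda_k = \delta_k/A_k$ legitimately lie in $[0,1]$ for the reciprocal argument. For indices $k \geq k_0$ I only know $\delta_k < A$, not $\delta_k < A_k$, so $\delta_k/A_k$ need not be at most $1$; I would therefore argue that if $\delta_k \geq A_k$ the linear step \eqref{eq:linear} applies and gives an even stronger decrease, while if $\delta_k < A_k$ the unconstrained minimizer is feasible and the reciprocal estimate goes through, and in both cases the resulting $1/\delta$ increment is at least $c_k/(2A)$. Establishing that $\delta_k$ remains below $A$ for all $k \geq k_0$ (so that the definition of $k_0$ is meaningful and the telescoping stays valid) requires checking that each admissible step cannot push $\delta_{k+1}$ back above $A$, which follows by choosing $\lambda_k$ to make the bracketed quantity nonpositive.
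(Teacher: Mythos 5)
Your proof is correct and follows essentially the same route as the paper's: minimize the quadratic in $\lambda_k$ over $[0,1]$ (taking $\lambda_k=1$ when $\delta_k\ge A_k$ to obtain \eqref{eq:linear}), then telescope the reciprocal estimate $1/\delta_{k+1}\ge 1/\delta_k + c_k/(2A)$ from $k_0$ onward, using monotonicity of $\{\delta_k\}$ and $\delta_{k_0}<A$, to obtain \eqref{eq:sub}. The only cosmetic difference is that the paper first relaxes $A_k$ to $A$ in the recursion and chooses $\lambda_k=\min\{1,\delta_k/A\}$, so that for $k\ge k_0$ a single case suffices, whereas you keep $A_k$ and must (correctly, as you indicate) check that the case $A_k\le\delta_k<A$ still yields a reciprocal increment of at least $c_k/(2A)$ via \eqref{eq:linear}.
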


By Lemma~\ref{lemma:seq} together with Assumption \ref{assum:Q2}, we
can show that the algorithms converge at a global sublinear rate (with
a linear rate in the early stages) for the case of convex $F$,
provided that the final value of $H_k$ for each iteration $k$ of
Algorithms~\ref{alg:QuasiArmijo} and \ref{alg:QuasiModifyH} is
positive semidefinite.

\begin{theorem}
\label{thm:sublinear0}
Assume that $f$ is convex, \modify{Assumptions \ref{assum:general} and \ref{assum:Q2} hold}, $H_k
\succeq 0$ for all $k$, and there is some $\eta \in [0,1)$ such that
  the approximate solution $d^k$ of \eqref{eq:quadratic} satisfies
  \eqref{eq:approx} for all $k$.  Then the following claims for
  Algorithm \ref{alg:QuasiArmijo} are true.
\begin{enumerate}
\item When $F(x^k) - F^* \geq (x^k - P_\Omega(x^k))^T H_k (x^k -
  P_\Omega(x^k))$, we have a linear improvement of the objective error
  at iteration $k$, that is,
\begin{align}
F\left(x^{k+1}\right) - F^*
\leq \left( 1 - \frac{\left( 1 - \eta
	\right) \gamma \alpha_k}{2}\right) \left(F\left( x^k \right) -
	F^*\right).
	\label{eq:esl0}
\end{align}
\item
  For any $k \geq k_0$, where $k_0 \coloneqq \arg\min \{ k: F(x^k) -
  F^* < M R_0^2\}$, we have
\begin{align}
F\left(x^k\right) - F^* &\leq \frac{2 M R_0^2}{\gamma (1 -
	\eta) \sum_{t = k_0}^{k-1} \alpha_t + 2},
	\label{eq:sublinear0}
\end{align}
suggesting sublinear convergence of the objective error.
If there exists $\bar \alpha > 0$ such that $\alpha_k
\geq \bar\alpha$ for all $k$, we have
\begin{equation}
k_0 \leq \max\left\{0, 1 + \frac{2}{\gamma \left( 1 - \eta \right)
	\bar\alpha } \log \frac{F\left( x^0 \right) - F^*}{MR_0^2} \right\}.
\label{eq:k0}
\end{equation}
\end{enumerate}

For Algorithm \ref{alg:QuasiModifyH} under the condition
\eqref{eq:H0}, the above results still hold, with $\bar \alpha = 1$,
$\alpha_k \equiv 1$ for all $k$, and $M$ replaced by $\tilde
M_1(\eta)$ defined in \eqref{eq:m1} for Variant 1, and $\tilde
M_2(\eta)$ defined in \eqref{eq:m2} for Variant 2.
\end{theorem}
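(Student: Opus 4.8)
The plan is to reduce the per-iteration behaviour of both algorithms to the abstract recursion analyzed in Lemma~\ref{lemma:seq}, using the identifications $\delta_k \coloneqq F(x^k) - F^*$, $A_k \coloneqq (x^k - P_{\Omega}(x^k))^T H_k (x^k - P_{\Omega}(x^k))$, and $c_k \coloneqq \gamma(1-\eta)\alpha_k$. Since $H_k \succeq 0$ we have $A_k \ge 0$, and Assumption~\ref{assum:Q2} together with \eqref{eq:A} gives the uniform bound $A_k \le \hat A \le M R_0^2$; set $A \coloneqq M R_0^2$. Once the recursion \eqref{eq:seq} is established, the two claims of the theorem follow by reading off \eqref{eq:linear} and \eqref{eq:sub}.

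First I would derive the one-step recursion. For Algorithm~\ref{alg:QuasiArmijo}, the Armijo condition \eqref{eq:armijo} gives $\delta_{k+1} - \delta_k \le \alpha_k \gamma \Delta_k$. Because $H_k \succeq 0$, comparing \eqref{eq:Delta} with the definition of $Q_k$ yields $\Delta_k = Q_k(d^k) - \tfrac12 (d^k)^T H_k d^k \le Q_k(d^k)$. The inexactness criterion \eqref{eq:approx} gives $Q_k(d^k) \le (1-\eta) Q_k^*$, where one notes $Q_k^* \le Q_k(0) = 0$ so that the inequality is correctly oriented. Finally, Lemma~\ref{lemma:Q} in the convex case \eqref{eq:Qlambda} bounds $Q_k^* \le -\lambda \delta_k + \tfrac{\lambda^2}{2} A_k$ for every $\lambda \in [0,1]$. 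Chaining these estimates, and using $\alpha_k\gamma(1-\eta) > 0$ to preserve the direction of each inequality, produces
\[
\delta_{k+1} \le \delta_k + \gamma(1-\eta)\alpha_k\Big(-\lambda \delta_k + \tfrac{A_k}{2}\lambda^2\Big), \qquad \forall \lambda \in [0,1],
\]
which is exactly \eqref{eq:seq} with the stated $c_k$. For Algorithm~\ref{alg:QuasiModifyH} the same recursion (now with $\alpha_k \equiv 1$) follows even more directly: the sufficient decrease criterion \eqref{eq:decrease} gives $\delta_{k+1} - \delta_k \le \gamma Q_k(d^k)$, after which the identical \eqref{eq:approx}--Lemma~\ref{lemma:Q} chain applies.

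With the recursion in hand, Lemma~\ref{lemma:seq} finishes the argument. Its conclusion \eqref{eq:linear}, valid whenever $\delta_k \ge A_k$, is precisely Claim~1, i.e. \eqref{eq:esl0}, and its conclusion \eqref{eq:sub} with $A = M R_0^2$ is precisely the sublinear bound \eqref{eq:sublinear0}. To bound $k_0$ under $\alpha_k \ge \bar\alpha$, I would observe that for every $k < k_0$ we have $\delta_k \ge M R_0^2 \ge A_k$, so Claim~1 applies with contraction factor at most $1 - \tfrac12\gamma(1-\eta)\bar\alpha$; iterating gives $\delta_k \le (1 - \tfrac12\gamma(1-\eta)\bar\alpha)^k \delta_0$, and forcing the right-hand side below $M R_0^2$ via the elementary estimate $\log(1-x) \le -x$ yields \eqref{eq:k0}, with the outer $\max\{0,\cdot\}$ covering the case $\delta_0 < M R_0^2$. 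For Algorithm~\ref{alg:QuasiModifyH} one sets $\bar\alpha = 1$ and replaces $M$ by the bound on $\|H_k\|$ for the final (accepted) $H_k$, namely $\tilde{M}_1(\eta)$ in Variant~1 or $\tilde{M}_2(\eta)$ in Variant~2, both supplied by Lemma~\ref{lemma:Hbound}.

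The main delicate point is the correct orientation of the sign manipulations in the chain — in particular exploiting $H_k \succeq 0$ to drop the quadratic term from $\Delta_k$, and using $Q_k^* \le 0$ so that scaling by $(1-\eta)$ preserves the inequality — while carrying the free parameter $\lambda$ through every step so that the result matches the exact hypothesis of Lemma~\ref{lemma:seq}. A minor technicality is the degenerate case $A_k = 0$ (where Lemma~\ref{lemma:seq} formally requires $A_k > 0$): there $x^k - P_{\Omega}(x^k)$ lies in the kernel of $H_k$, the recursion continues to hold with $A_k$ replaced by any positive upper bound, and the claimed bounds follow a fortiori.
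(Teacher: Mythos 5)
Your proposal is correct and follows essentially the same route as the paper's own proof: the identifications $\delta_k = F(x^k)-F^*$, $A_k = (x^k-P_\Omega(x^k))^T H_k (x^k-P_\Omega(x^k))$, $c_k = \alpha_k\gamma(1-\eta)$, the chain Armijo/sufficient-decrease $\to$ drop the quadratic term via $H_k \succeq 0$ $\to$ \eqref{eq:approx} $\to$ \eqref{eq:Qlambda}, and the appeal to Lemma~\ref{lemma:seq} are exactly the paper's argument. Your explicit iteration of the linear contraction to obtain \eqref{eq:k0} and your remark on the degenerate case $A_k = 0$ merely fill in details the paper leaves implicit.
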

\begin{proof}
  Denoting $\delta_k \coloneqq F(x^k) - F^*$, we have for
  Algorithm~\ref{alg:QuasiArmijo} that the sufficient decrease
  condition \eqref{eq:armijo} together with $H_k \succeq 0$ imply that
\begin{equation}
\label{eq:Q_F}
\delta_{k+1} - \delta_k \leq \alpha_k \gamma \Delta_k
= \alpha_k \gamma \left(Q_k\left(d^k\right) - \frac12 \left( d^k
	\right)^T H_k d^k\right)
\leq \alpha_k \gamma Q_k\left(d^k\right).
\end{equation}
By defining
\begin{equation*}
	A_k \coloneqq \left(x^k - P_\Omega\left( x^k \right)\right)^T H_k
	\left(x^k - P_\Omega\left( x^k \right)\right), \quad
	A\coloneqq M R_0^2,
\end{equation*}
(note that $A_k \le A$ follows from \eqref{eq:A})
and using \eqref{eq:approx}, \eqref{eq:Q_F}, and \eqref{eq:Qlambda},
we obtain 
\begin{equation}
\delta_{k+1} - \delta_k \leq \alpha_k\gamma \left(1 -
	\eta\right)\left(-\lambda_k \delta_k + \frac{A_k\lambda_k^2}{2}
	\right),\quad \forall \lambda_k\in[0,1].
\label{eq:tmp3}
\end{equation}
We note that \eqref{eq:tmp3} satisfies \eqref{eq:seq} with
\begin{equation*}
		c_k = \alpha_k\gamma \left(1 - \eta\right).
\end{equation*}
The results now follow directly from Lemma~\ref{lemma:seq}.

For Algorithm \ref{alg:QuasiModifyH},
from \eqref{eq:decrease} and
\eqref{eq:approx}, we get that for any $k\geq 0$,
\begin{equation}
\delta_{k+1} - \delta_k \leq \gamma \left(1 - \eta\right)Q^*_k,
\label{eq:Q_F2}
\end{equation}
and the remainder of the proof follows the above procedure starting
from the right-hand side of \eqref{eq:Q_F} with $\alpha_k \equiv 1$.
\qed
\end{proof}

The conditions of Parts 1 and 2 of Theorem~\ref{thm:sublinear0} bear
further consideration.  When the regularization term $\psi$ is not
present in $F$, and $M$ is a global bound on the norm of the true
Hessian $\nabla^2 f(x)$, the condition in Part 2 of
Theorem~\ref{thm:sublinear0} is satisfied for $k_0=0$, since $f(x^0) -
f^* \le \frac12 M \|x^0 - P_\Omega (x^0) \|^2 \le \frac12
MR_0^2$. Under these circumstances, the linear convergence result of
Part 1 may appear not to be interesting. We note, however, that the
contribution from $\psi$ may make a significant difference in the
general case (in particular, it may result in $F(x^0)-F^* > MR_0^2$)
and, moreover, a choice of $H_k$ with $\| H_k \|$ significantly less
than $M$ may result in the condition of Part 1 being satisfied
intermittently during the computation. In particular, Part 1 lends some
support to the empirical observation of rapid convergence on the early
stages of the algorithms, as we discuss further below.
Note that \cite[Theorem~4]{Nes13a} suggests that when the algorithm is
exact proximal gradient, we get $F(x^k) - F^* \leq M R_0^2$ for all $k
\geq 1$, but this is not always the case when a different $H$ is
picked or when \eqref{eq:quadratic} is solved only approximately.


By combining Theorem~\ref{thm:sublinear0} with
Lemma~\ref{lemma:delta_d} and Corollary~\ref{lemma:delta_d2} (which
yield lower bounds on $\alpha_k$), we obtain the following results for
Algorithm~\ref{alg:QuasiArmijo}. 
\begin{corollary}
\label{cor:QorH}
Assume the conditions of Theorem \ref{thm:sublinear0} are all
satisfied. Then we have the following.
\begin{enumerate}
\item If there exists $\sigma > 0$ such that $\lambdamin(H_k) \ge
  \sigma$ for all $k$, then \eqref{eq:esl0} becomes
\begin{equation}
	\frac{F\left(x^{k+1}\right) - F^*}{F\left( x^k \right) - F^*}
	\leq 1 - \frac{\gamma}{2}\min\left\{\left(1 - \eta \right)
	, \frac{2\left(1 - \sqrt\eta\right)\beta(1 -
	\gamma)\sigma }{L}\right\},
	\label{eq:esl0sigma}
\end{equation}
\eqref{eq:sublinear0} becomes
\begin{equation*}
	F\left(x^k\right) - F^* \leq \frac{2M R_0^2}{\gamma (k -
		k_0) \min\left\{ 1 - \eta, \frac{2 \left( 1 - \sqrt{\eta}
	\right)\beta \left( 1 - \gamma \right)\sigma }{L} \right\} + 2},
\end{equation*}
and \eqref{eq:k0} becomes
\begin{equation*}
	k_0 < 1+ \frac{2}{\gamma}\max\left\{0, \log \frac{F\left( x^0
	\right) - F^*}{M R_0^2} \right\} \cdot \max \left\{\frac{1}{(1 - \eta)},
	\frac{L}{2 (1 - \sqrt{\eta})\beta \left( 1 - \gamma \right)
\sigma} \right\}.
\end{equation*}
\item If $Q_k$ is $\sigma$-strongly convex and $H_k \succeq 0$ for all
	$k$, then
\eqref{eq:esl0} becomes
\begin{align*}
	\frac{F\left(x^{k+1}\right) - F^*}{F\left( x^k \right) - F^*}
	\leq 1 - \frac{\gamma}{2}\min\left\{ 1 - \eta
	,  \frac{\left(1 - \sqrt\eta\right)^2\beta(1 -
	\gamma)\sigma }{L}\right\},
\end{align*}
\eqref{eq:sublinear0} becomes
\begin{equation*}
	F\left(x^k\right) - F^* \leq \frac{2M R_0^2}{\gamma (k -
	k_0)\min\left\{ 1 - \eta, \frac{(1 -
		\sqrt{\eta})^2\beta \left( 1 - \gamma \right)
	\sigma}{L}\right\} + 2},
\end{equation*}
and \eqref{eq:k0} becomes
\begin{equation*}
	k_0 < 1 + \frac{2}{\gamma}\max\left\{0, \log \frac{F\left( x^0
	\right) - F^*}{M R_0^2} \right\}\max \left\{\frac{1}{(1 - \eta)},
	\frac{L}{(1 - \sqrt{\eta})^2\beta \left( 1 - \gamma \right)
\sigma} \right\}.
\end{equation*}
\end{enumerate}
\end{corollary}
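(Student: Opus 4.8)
The plan is to read off both parts as direct specializations of Theorem~\ref{thm:sublinear0}. That theorem already delivers the three conclusions \eqref{eq:esl0}, \eqref{eq:sublinear0}, and \eqref{eq:k0} for Algorithm~\ref{alg:QuasiArmijo}, each expressed through the line-search step sizes $\alpha_k$ (and, in \eqref{eq:k0}, through a uniform lower bound $\bar\alpha$ with $\alpha_k\ge\bar\alpha$). Since the right-hand sides of these three conclusions are all monotone decreasing in each $\alpha_k$, the whole argument reduces to producing a uniform lower bound $\alpha_k\ge\bar\alpha>0$ from the line-search lemmas and substituting it in. The two parts differ only in which line-search lemma supplies $\bar\alpha$.

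For Part 1, the hypothesis $\lambdamin(H_k)\ge\sigma>0$ means the strong convexity of $Q_k$ is carried by its quadratic term, which is exactly the situation of Corollary~\ref{lemma:delta_d2}. I would apply that corollary and then use the fact that the bound \eqref{eq:linesearchbound2} is nondecreasing in $\sigma$ to pass from $\lambdamin(H_k)\ge\sigma$ (an inequality) to the uniform estimate $\alpha_k\ge\bar\alpha=\min\{1,\,2\beta(1-\gamma)\sigma/(L(1+\sqrt{\eta}))\}$ valid for every $k$. For Part 2, the relevant strong convexity modulus $\sigma$ of $Q_k$ may come in part from $\psi$ rather than from $H_k$, so I would instead invoke the more general Lemma~\ref{lemma:delta_d}. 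Using only $H_k\succeq 0$, i.e.\ $\lambdamin(H_k)\ge 0$, to discard the $\lambdamin(H)$ contribution in the numerator of \eqref{eq:linesearchbound} yields the weaker uniform estimate $\alpha_k\ge\bar\alpha=\min\{1,\,\beta(1-\gamma)(1-\sqrt{\eta})\sigma/(L(1+\sqrt{\eta}))\}$.

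In both cases the single identity $(1-\eta)/(1+\sqrt{\eta})=1-\sqrt{\eta}$ drives every simplification. Multiplying the appropriate $\bar\alpha$ by $(1-\eta)$ and applying this identity converts the factor $(1-\eta)\alpha_k$ in \eqref{eq:esl0} into $\min\{1-\eta,\,2(1-\sqrt{\eta})\beta(1-\gamma)\sigma/L\}$ for Part 1 and into $\min\{1-\eta,\,(1-\sqrt{\eta})^2\beta(1-\gamma)\sigma/L\}$ for Part 2, giving \eqref{eq:esl0sigma} and its Part 2 analogue; the difference between the two parts is precisely one versus two factors of $(1-\sqrt{\eta})$. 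The same substitutions $\sum_{t=k_0}^{k-1}\alpha_t\ge(k-k_0)\bar\alpha$ into \eqref{eq:sublinear0} and $1/((1-\eta)\bar\alpha)=\max\{1/(1-\eta),\,\cdots\}$ into \eqref{eq:k0} (after a routine reorganization of the $\max\{0,\cdot\}$ term) produce the remaining displays.

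There is no substantive obstacle here — the result is essentially bookkeeping on top of earlier lemmas — and the only points needing care are justifying the monotonicity step in Part 1 (so that the inequality $\lambdamin(H_k)\ge\sigma$ still yields the claimed uniform $\bar\alpha$) and carrying out the $(1-\eta)/(1+\sqrt{\eta})=1-\sqrt{\eta}$ reduction consistently, so that Parts 1 and 2 end up with the correct power of $(1-\sqrt{\eta})$.
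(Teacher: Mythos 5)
Your proposal is correct and matches the paper's own (unstated, one-line) argument: the paper derives Corollary~\ref{cor:QorH} precisely by combining Theorem~\ref{thm:sublinear0} with the step-size lower bounds of Corollary~\ref{lemma:delta_d2} (Part 1) and Lemma~\ref{lemma:delta_d} with $\lambdamin(H_k)\ge 0$ (Part 2), followed by the simplification $(1-\eta)/(1+\sqrt{\eta})=1-\sqrt{\eta}$. Your handling of the monotonicity in $\sigma$ and of the $\max\{0,\cdot\}$ reorganization in \eqref{eq:k0} is exactly the bookkeeping the paper leaves implicit.
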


We make some remarks on the results above.
\begin{remark}
For any $\eta\in[0,1)$, we have
\begin{equation*}
\frac{1}{2(1 - \sqrt{\eta})} < \frac{1}{1 - \eta}  < \frac{1}{(1 - \sqrt{\eta})^2}.
\end{equation*}
Therefore, Algorithm~\ref{alg:QuasiArmijo} with positive definite
$H_k$ has better dependency on $\eta$ than the case in which we set
$\lambdamin(H_k) = 0$ and rely on $\psi$ to make $Q_k$ strongly
convex. If $\psi$ is strongly convex, we can move some of its
curvature to $H_k$ without changing the subproblems
\eqref{eq:quadratic}. This strategy may require us to increase $M$,
but this has only a slight effect on the bounds in
Corollary~\ref{cor:QorH}. These bounds give good reasons to capture
the curvature of $Q_k$ in the Hessian $H_k$ alone, so henceforth we
focus our discussion on this case.
\end{remark}

\begin{remark}
For Algorithm \ref{alg:QuasiModifyH}, when we use the bounds
\eqref{eq:m1} and \eqref{eq:m2} for $M$ in \eqref{eq:R0}, the
dependency of the global complexity on $\eta$ becomes
\begin{equation*}
	\max\left\{\frac{1}{1 - \eta}, 
	\frac{1}{\left(2 - \gamma \left(1 - \sqrt{\eta}\right)\right)(1 - \sqrt{\eta})}
\right\}
	\leq
	\max\left\{\frac{1}{1 - \eta},
	\frac{1}{(2 - \gamma) (1 - \sqrt{\eta})}\right\},
\end{equation*}
  This result is slightly worse than that of using
  positive definite $H$ in Algorithm \ref{alg:QuasiArmijo} if we compare
  the second part in the max operation.
\end{remark}

\begin{remark}
The bound in \eqref{eq:A} is not tight for general $H$, unless
$H_k \equiv MI$, as in standard prox-gradient methods.  This
observation gives further intuition for why second-order methods tend
to perform well even though their iteration complexities (which are
based on the bound \eqref{eq:A}) tend to be worse than first-order
methods.
Moreover, when $H_k$ incorporates curvature information for $f$, step
sizes $\alpha_k$ are often much larger than the worst-case bounds that
are used in Corollary~\ref{cor:QorH}. Theorem~\ref{thm:sublinear0},
which shows how the convergence rates are related directly to the
$\alpha_k$, would give tighter bounds in such cases.
Line search on $H_k$ in Algorithm~\ref{alg:QuasiModifyH} does not
improve the rate directly, but we note that using $H_k$ with smaller
norm whenever possible gives more chances of switching to the
intermittent linear rate \eqref{eq:esl0}.

\end{remark}

Part 1 of Theorem~\ref{thm:sublinear0} also explains why solving the
{\em subproblem} \eqref{eq:quadratic} approximately can save the
running time significantly, since because of fast early convergence
rate, a solution of moderate accuracy can be attained relatively
quickly.

\subsubsection{Linear Convergence for Optimal Set Strongly Convex Functions}


We now consider problems that satisfy the
$\mu$-optimal-set-strong-convexity condition \eqref{eq:strong} for
some $\mu > 0$, and show that our algorithms have a global linear
convergence property.

\begin{theorem} \label{th:7}
If \modify{Assumption \ref{assum:general} holds}, $f$ is convex, $F$
is $\mu$-optimal-set-strongly convex for some $\mu > 0$, there is some
$\eta \in [0,1)$ such that at every iteration of
	Algorithm~\ref{alg:QuasiArmijo}, the approximate solution $d$ of
	\eqref{eq:quadratic} satisfies \eqref{eq:approx}, and
\begin{equation}
\label{eq:H}
\sigma I \preceq H_k \preceq MI, \quad \text{for some }M\geq \sigma
>0,\quad \forall k.
\end{equation}
Then for $k=0,1,2,\dotsc$, we have
\begin{subequations}
\begin{align}
	\label{eq:qlinear0}
	&~\frac{F\left(x^{k+1}\right) - F^*}{F\left( x^k \right) - F^*}
	\leq 1 - \frac{\alpha_k \gamma \left(1 - \eta\right)\mu}{\mu
	+ \|H_k\|} \\
	\leq&~ 1 - \frac{\gamma \mu}{\mu + M} \min\left\{ \left( 1 - \eta
		\right), \frac{2 \left( 1 - \sqrt{\eta} \right)\beta \left( 1
	- \gamma  \right) \sigma}{L}\right\}.
\label{eq:qlinear}
\end{align}
\end{subequations}
Moreover, on iterates $k$ for which $F(x^k) - F^* \geq (x^k -
P_\Omega(x^k))^T H_k (x^k -P_\Omega(x^k))$, these per-iteration
contraction rates can be replaced by the faster rates \eqref{eq:esl0}
and \eqref{eq:esl0sigma}.
\end{theorem}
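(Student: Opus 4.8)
The plan is to track the objective error $\delta_k \coloneqq F(x^k) - F^*$ and chain together the sufficient-decrease rule, the inexactness criterion \eqref{eq:approx}, and the optimal-set-strong-convexity estimate on $Q^*$ from Lemma~\ref{lemma:Q}. First I would reuse the computation \eqref{eq:Q_F}: since \eqref{eq:H} forces $H_k \succeq 0$ and $f$ is convex, the Armijo acceptance \eqref{eq:armijo} gives $\delta_{k+1} - \delta_k \le \alpha_k \gamma \Delta_k \le \alpha_k \gamma Q_k(d^k)$, where the last step drops the nonnegative term $\tfrac12 (d^k)^T H_k d^k$.

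Next, the inexactness criterion \eqref{eq:approx} yields $Q_k(d^k) \le (1-\eta) Q_k^*$, and the estimate \eqref{eq:Qstrong} of Lemma~\ref{lemma:Q} (applicable because $f$ is convex and $F$ is $\mu$-optimal-set-strongly convex) gives $Q_k^* \le \tfrac{\mu}{\mu + \|H_k\|}(F^* - F(x^k)) = -\tfrac{\mu}{\mu+\|H_k\|}\delta_k$. Because $Q_k^* \le 0$ and $(1-\eta) > 0$, multiplying through preserves the direction of the inequality, so combining the three estimates produces $\delta_{k+1} - \delta_k \le -\tfrac{\alpha_k\gamma(1-\eta)\mu}{\mu+\|H_k\|}\delta_k$, which rearranges into the first bound \eqref{eq:qlinear0}. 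The one sign subtlety I would flag explicitly is that $Q_k^*$ is nonpositive, so the factor $(1-\eta)$ relaxes rather than tightens the bound, and that $F^* - F(x^k) = -\delta_k$.

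To pass from \eqref{eq:qlinear0} to the rate \eqref{eq:qlinear}, I would bound the two data-dependent quantities using \eqref{eq:H}. The bound $\|H_k\| \le M$ gives $\tfrac{\mu}{\mu+\|H_k\|} \ge \tfrac{\mu}{\mu+M}$. For the step size, I would invoke the line-search lower bound \eqref{eq:linesearchbound} of Lemma~\ref{lemma:delta_d} with strong-convexity modulus $\sigma$ and $\lambdamin(H_k) \ge \sigma$ (both implied by \eqref{eq:H}), which collapses to $\alpha_k \ge \min\{1, 2\beta(1-\gamma)\sigma/(L(1+\sqrt{\eta}))\}$, i.e. the bound $\bar\alpha$ of Corollary~\ref{lemma:delta_d2}. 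Substituting these and using the identity $(1-\eta)/(1+\sqrt{\eta}) = 1-\sqrt{\eta}$ turns the product $(1-\eta)\alpha_k$ into $\min\{1-\eta,\, 2(1-\sqrt{\eta})\beta(1-\gamma)\sigma/L\}$, yielding \eqref{eq:qlinear}.

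Finally, the ``moreover'' claim is a pointer to earlier results rather than a fresh argument: on any iterate with $F(x^k)-F^* \ge (x^k - P_\Omega(x^k))^T H_k (x^k - P_\Omega(x^k))$, the hypotheses of Part~1 of Theorem~\ref{thm:sublinear0} hold (convex $f$, $H_k \succeq 0$, and \eqref{eq:approx}), so \eqref{eq:esl0} applies, and since $\lambdamin(H_k) \ge \sigma$ the sharper form \eqref{eq:esl0sigma} of Corollary~\ref{cor:QorH} holds verbatim. I do not expect a genuine obstacle here; the only real work is the careful bookkeeping of the three chained inequalities in the first two paragraphs, and the only pitfall is the sign handling when multiplying the nonpositive quantity $Q_k^*$ by $(1-\eta)$ and when identifying $F^*-F(x^k)$ with $-\delta_k$.
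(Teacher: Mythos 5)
Your derivation of \eqref{eq:qlinear0} and \eqref{eq:qlinear} is exactly the paper's argument: chain the Armijo bound \eqref{eq:Q_F} with the inexactness condition \eqref{eq:approx} and the estimate \eqref{eq:Qstrong} from Lemma~\ref{lemma:Q}, then lower-bound $\alpha_k$ via Corollary~\ref{lemma:delta_d2} (using $\lambdamin(H_k)\ge\sigma$ from \eqref{eq:H}) and use $\|H_k\|\le M$; your algebra with $(1-\eta)/(1+\sqrt{\eta})=1-\sqrt{\eta}$ is correct.

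The one gap is in the ``moreover'' part. You invoke Part~1 of Theorem~\ref{thm:sublinear0} and Corollary~\ref{cor:QorH} after listing ``convex $f$, $H_k\succeq 0$, and \eqref{eq:approx}'' as their hypotheses, but Theorem~\ref{thm:sublinear0} also assumes Assumption~\ref{assum:Q2}, and the level-set boundedness $R_0<\infty$ in \eqref{eq:R0} is not among the hypotheses of the present theorem, so it must be verified. The paper does this explicitly: rearranging \eqref{eq:strong} and using $F(\lambda x + (1-\lambda)P_\Omega(x))\ge F^*$, then dividing by $\lambda$ and letting $\lambda\to 0$, gives $F(x)-F^*\ \ge\ \tfrac{\mu}{2}\,\|x-P_\Omega(x)\|^2$ for all $x$, which bounds $\|x-P_\Omega(x)\|$ on the level set $\{x: F(x)\le F(x^0)\}$; the bound $\|H_k\|\le M$ is already supplied by \eqref{eq:H}. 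This is a short, easily supplied step, but as written your appeal to Theorem~\ref{thm:sublinear0} is incomplete without it.
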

\begin{proof}
  By rearranging \eqref{eq:Q_F}, we have
  \begin{subequations}
\begin{align}
F\left(x^{k+1}\right) - F^* &\leq F\left(x^k\right) - F^* + \alpha_k
	\gamma Q_k\left(d^k\right)\nonumber\\
&\leq F\left(x^k\right) - F^* + \alpha_k \gamma \left(1 -
	\eta\right) Q_k^*
\label{eq:intermediate.1}\\
&\leq F\left( x^k \right) - F^* - \alpha_k \gamma \left( 1 - \eta
	\right) \frac{\mu}{\mu + \|H_k\|} \left(F\left(x^k\right) -
	F^*\right)
\label{eq:intermediate.2}\\
\nonumber
&= \left(1 - \alpha_k \gamma \left(1 - \eta\right)
	\frac{\mu}{\mu + \|H_k\|}\right) \left(F\left(x^k\right)
	- F^*\right),
\end{align}
  \end{subequations}
where in \eqref{eq:intermediate.1} we used the inexactness condition
\eqref{eq:approx} and in \eqref{eq:intermediate.2} we used
\eqref{eq:Qstrong}.  Using the result in
Corollary~\ref{lemma:delta_d2} to lower-bound $\alpha_k$, we obtain
\eqref{eq:qlinear}.

To show that the part for the early fast rate in \eqref{eq:esl0} and
\eqref{eq:esl0sigma} can be applied, we show that Assumption
\ref{assum:Q2} holds. Then because $f$ is assumed to be convex as well
here, Theorem \ref{thm:sublinear0} and Corollary \ref{cor:QorH} apply as well.
Consider \eqref{eq:strong}, by rearranging the terms, we get
\begin{align}
	\nonumber
	\lambda \left( F(x) - F^* \right) &\geq
	\frac{\mu \lambda (1 - \lambda)}{2}\left\| x - P_{\Omega}\left( x
	\right) \right\|^2 +
	F\left( \lambda x + \left( 1 - \lambda \right)P_{\Omega}\left(
	x \right) \right) - F^*\\
	&\geq
	\frac{\mu \lambda (1 - \lambda)}{2}\left\| x - P_{\Omega}\left( x
	\right) \right\|^2
	, \quad \forall \lambda \in [0,1],
	\label{eq:inter}
\end{align}
as $F\left( \lambda x + \left( 1 - \lambda \right) P_{\Omega}\left(
x \right) \right) \geq F^*$ from optimality.
By dividing both sides of \eqref{eq:inter} by $\lambda$ and letting
$\lambda \rightarrow 0$, we get the bound
\begin{equation}
	F(x^0) - F^* \geq F(x) - F^* \geq \frac{\sigma}{2} \|x -
	P_{\Omega}(x)\|, \forall x: F(x) \leq F(x^0),
\end{equation}
validating Assumption \ref{assum:Q2}.
\qed
\end{proof}

\modify{Note that the parameter $\mu$ in the theorem above is decided
	by the problem and cannot be changed, while $\sigma$ can be
altered according to the algorithm choice.}
We have a similar result for Algorithm~\ref{alg:QuasiModifyH}.

\begin{theorem} \label{th:8}
If \modify{Assumption \ref{assum:general} holds}, $f$ is
convex,  $F$ is $\mu$-optimal-set-strongly convex
for some $\mu > 0$, there exists some $\eta \in [0,1)$ such that at
	every iteration of Algorithm~\ref{alg:QuasiModifyH},
the approximate solution $d$ of \eqref{eq:quadratic} satisfies
\eqref{eq:approx}, and the conditions for $H^0_k$ in
Lemma~\ref{lemma:Hbound} are satisfied for all $k$.  Then we have
\begin{equation}
	\frac{F\left(x^{k+1}\right) - F^*}{F\left( x^{k} \right) - F^*
}\leq 1 - \gamma \frac{\mu\left( 1 - \eta \right)}{\mu +
	\|H_k\|}
	, \quad k=0,1,2,\dotsc,
	\label{eq:Hlinear}
\end{equation}
and the right-hand side of \eqref{eq:Hlinear} can be further bounded
by
\begin{equation}
1 - \gamma \frac{\mu\left(1 - \eta\right)}{\mu + \tilde{M}_1(\eta)} \quad \mbox{ and } \quad
1 - \gamma \frac{\mu\left(1 - \eta\right)}{\mu + \tilde{M}_2(\eta)}
\label{eq:iters}
\end{equation}
for Variant 1 and Variant 2, respectively, where $\tilde{M}_1(\eta)$ and
$\tilde{M}_2(\eta)$ are defined in Lemma~\ref{lemma:Hbound}.
Moreover, when $F(x^k) - F^* \geq (x^k - P_\Omega(x^k))^T H_k (x^k
-P_\Omega(x^k))$,
the faster rate \eqref{eq:esl0} (with $\alpha_k \equiv 1$ and the
modification for Algorithm \ref{alg:QuasiModifyH} mentioned in Theorem
\ref{thm:sublinear0})
can be used to replace \eqref{eq:Hlinear}.
\end{theorem}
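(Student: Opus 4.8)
The plan is to follow the template of the proof of Theorem~\ref{th:7}, replacing the Armijo-based descent estimate \eqref{eq:Q_F} by the sufficient-decrease condition \eqref{eq:decrease} that governs Algorithm~\ref{alg:QuasiModifyH}. The key one-step inequality is already isolated in \eqref{eq:Q_F2}: writing $\delta_k \coloneqq F(x^k) - F^*$ and recalling that $x^{k+1} = x^k + d^k$ here, the condition \eqref{eq:decrease} gives $\delta_{k+1} - \delta_k \le \gamma Q_k(d^k)$, and combining this with the inexactness bound \eqref{eq:approx} (in the form $Q_k(d^k) \le (1-\eta)Q_k^*$, multiplied by $\gamma > 0$) reproduces
\begin{equation*}
\delta_{k+1} - \delta_k \le \gamma\left(1 - \eta\right) Q_k^*.
\end{equation*}
First I would substitute the bound \eqref{eq:Qstrong} from Lemma~\ref{lemma:Q}, namely $Q_k^* \le -\tfrac{\mu}{\mu + \|H_k\|}\,\delta_k$, into this display. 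Since $\gamma(1-\eta) > 0$ and $\delta_k \ge 0$, this yields $\delta_{k+1} \le \bigl(1 - \gamma(1-\eta)\tfrac{\mu}{\mu + \|H_k\|}\bigr)\delta_k$, which is exactly \eqref{eq:Hlinear}.

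Next, to obtain the explicit bounds \eqref{eq:iters}, I would exploit the monotonicity of the map $t \mapsto 1 - \gamma\mu(1-\eta)/(\mu + t)$, which is increasing in $t$ because $\mu/(\mu + t)$ decreases as $t$ grows. Under the stated hypotheses on $H^0_k$, Lemma~\ref{lemma:Hbound} guarantees that the final $H_k$ produced by the inner while-loop satisfies $\|H_k\| \le \tilde M_1(\eta)$ for Variant~1 and $\|H_k\| \le \tilde M_2(\eta)$ for Variant~2. Replacing $\|H_k\|$ in \eqref{eq:Hlinear} by these upper bounds therefore only increases the right-hand side, producing the two rates in \eqref{eq:iters}.

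For the faster early-stage rate I would argue, exactly as in Theorem~\ref{th:7}, that Assumption~\ref{assum:Q2} is in force, so that the conclusion of Theorem~\ref{thm:sublinear0} specialized to Algorithm~\ref{alg:QuasiModifyH} (with $\alpha_k \equiv 1$) applies. The norm bound $\|H_k\| \le M$ required by Assumption~\ref{assum:Q2} is supplied by Lemma~\ref{lemma:Hbound} with $M = \tilde M_1(\eta)$ or $\tilde M_2(\eta)$, while the uniform bound $R_0$ on $\|x - P_\Omega(x)\|$ over the sublevel set $\{x : F(x) \le F(x^0)\}$ follows from dividing the $\mu$-optimal-set-strong-convexity inequality \eqref{eq:strong} by $\lambda$ and letting $\lambda \to 0$, precisely as in \eqref{eq:inter}. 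With Assumption~\ref{assum:Q2} validated, Part~1 of Theorem~\ref{thm:sublinear0} delivers the improved contraction \eqref{eq:esl0} with $\alpha_k \equiv 1$ on every iterate for which $F(x^k) - F^* \ge (x^k - P_\Omega(x^k))^T H_k (x^k - P_\Omega(x^k))$.

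The main obstacle I anticipate lies not in the chain of inequalities, which is short and parallels Theorem~\ref{th:7}, but in invoking the constants consistently: in particular, ensuring that the $\|H_k\|$ appearing in \eqref{eq:Hlinear} is the norm of the \emph{final} modified matrix emerging from the while-loop, so that the bounds of Lemma~\ref{lemma:Hbound} legitimately apply, and that the monotonicity step uses the correct direction of the inequality. Note that no line search on $\alpha_k$ occurs in Algorithm~\ref{alg:QuasiModifyH}, so the step-size lower bounds of Corollary~\ref{lemma:delta_d2} play no role here; the effect of the inner modification of $H$ is captured entirely through $\|H_k\|$ and its upper bounds $\tilde M_1(\eta)$, $\tilde M_2(\eta)$.
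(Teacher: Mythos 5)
Your proposal is correct and follows essentially the same route as the paper's own proof: combine \eqref{eq:Q_F2} with \eqref{eq:Qstrong} to get \eqref{eq:Hlinear}, invoke Lemma~\ref{lemma:Hbound} to bound $\|H_k\|$ by $\tilde M_1(\eta)$ or $\tilde M_2(\eta)$ for \eqref{eq:iters}, and validate Assumption~\ref{assum:Q2} exactly as in Theorem~\ref{th:7} so that the early-stage rate \eqref{eq:esl0} from Theorem~\ref{thm:sublinear0} applies with $\alpha_k \equiv 1$. Your added remarks on monotonicity in $\|H_k\|$ and on the irrelevance of the step-size bounds of Corollary~\ref{lemma:delta_d2} are accurate clarifications, not deviations.
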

\begin{proof}
From \eqref{eq:Qstrong} and \eqref{eq:Q_F2}, we have
\begin{align*}
F\left(x^{k+1}\right) - F^*
&\leq F\left(x^k\right) - F^* + \gamma Q_k\left(d^k\right) \\
&\leq F\left(x^k\right) - F^* + \gamma \left(1 - \eta\right) Q_k^* \\
&\leq \left(1 - \gamma \frac{\mu}{\mu + \|H_k\|} \left(1 -
	\eta\right)\right)\left(F\left(x^k\right) - F^*\right),
\end{align*}
proving \eqref{eq:Hlinear}.
From Lemma~\ref{lemma:Hbound}, we ensure that $\|H_k\|$ is
upper-bounded by $\tilde M_1(\eta)$ and $\tilde M_2(\eta)$ for the two
variants respectively,
leading to \eqref{eq:iters}.
The statement concerning  \eqref{eq:esl0} follows from the same reasoning as in the proof
for Theorem \ref{th:7}.
	\qed
\end{proof}

By reasoning with the extreme eigenvalues of $H_k$, we can see that
the convergence rates still depend on the conditioning of $f$. For
Algorithm~\ref{alg:QuasiArmijo}, if we select $M \leq L$, then
backtracking may be necessary, and the bound \eqref{eq:qlinear} (in
which a factor $\mu/L$ appears) is germane.  This same factor appears
in both \eqref{eq:qlinear0} and \eqref{eq:qlinear} when $M>L$. Often,
however, the backtracking line search chooses a value of $\alpha_k$
that is not much less than $1$, which is why we believe that the
bounds \eqref{eq:esl0}, \eqref{eq:sublinear0}, and \eqref{eq:qlinear0}
(which depend explicitly on $\alpha_k$) have some value in revealing
the actual performance of the algorithm.  Similar comments apply to
Algorithm~\ref{alg:QuasiModifyH}, because \eqref{eq:decrease} may be
satisfied with $\|H_k\|$ much smaller than the bounds for properly
chosen $H^0_k$. 



In the interesting case in which we choose $H_k \equiv L I$ and $\eta
= 0$, we have $m_0 = \|H_k\| = L$ in Algorithm \ref{alg:QuasiModifyH},
and modification of $H_k$ is not needed, since \eqref{eq:decrease}
always holds for $\gamma = 1$. The bound \eqref{eq:sublinear0} becomes
$(F(x^k) - F^*) \leq 2 L R_0^2 / (k + 2)$, which matches the known
convergence rates of proximal gradient \citep{Nes13a} and gradient
descent \citep{Nes04a}.  The global linear rate in Theorem~\ref{th:8}
also matches that of existing proximal gradient analysis for strongly
convex problems, but the intermittent linear rate \eqref{eq:esl0} that
applies to both cases is new.  For the case of accelerated proximal
gradient covered in \cite{Nes13a}, although not covered directly by
our framework studied in this work,
one can combine our algorithm and analysis with the Catalyst framework
\citep{LinMH15a} to obtain similar accelerated rates for both the
strongly convex and the general convex cases.

\subsubsection{Sublinear Convergence of the First-order Optimality
Condition for Nonconvex Problems}

We consider now the case of nonconvex $F$.  In this situation,
Lemma~\ref{lemma:Q} cannot be used, so we consider other properties of
$Q$.  We can no longer guarantee the convergence of the objective
value to the global minimum.  Instead, we consider the norm of the
exact solution of the subproblem as the indicator of closeness to
the first-order optimality condition $0 \in \partial F(x)$ for
\eqref{eq:f} (see, for example, \cite[(14.2.16)]{Fle87a}).
In particular, it is known that $0 \in \partial F(x)$ if and only if
\begin{equation}
	\label{eq:1oa}
0 = \arg\min_d \, Q_I^x(d)  = \arg\min_d \,   \nabla f\left( x \right)^T d + \frac12
d^Td  + \psi\left( x+d \right) - \psi\left( x \right).
\end{equation}
This is a consequence of the following lemma.
\begin{lemma}
\label{lemma:proxg}
Given any $H \succ 0$, and $Q_H^x$ defined as in \eqref{eq:quadratic},
the following are true.
\begin{enumerate}
\item
	A point $x$ satisfies the first-order optimality condition
	$0 \in \partial F(x)$ if and only if
	\begin{equation*}
		0 = \arg\min_{d} \, Q^x_H(d).
	\end{equation*}
\item
	For any $x$, defining $d^*$ to be the minimizer of
        $Q_H^x(\cdot)$, we have
	\begin{equation}
	Q_H^x(d^*)  \leq - \frac12 {\lambdamin\left( H \right)} \|d^*\|^2.
	\label{eq:opt}
	\end{equation}
\end{enumerate}
\end{lemma}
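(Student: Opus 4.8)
The plan is to treat both parts as direct consequences of convex analysis applied to the subproblem objective $Q^x_H$, exploiting that $H \succ 0$ makes $Q^x_H$ strongly convex with modulus $\lambdamin(H) > 0$ and hence equipped with a unique minimizer $d^*$.

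For Part 1, I would compute the subdifferential of $Q^x_H$ at the origin. Since the map $d \mapsto \nabla f(x)^T d + \tfrac12 d^T H d$ is everywhere differentiable and $d \mapsto \psi(x+d)$ is convex, the sum rule for subdifferentials gives $\partial_d Q^x_H(d) = \nabla f(x) + Hd + \partial \psi(x+d)$. Evaluating at $d = 0$, the quadratic term vanishes and the remaining expression is exactly $\nabla f(x) + \partial \psi(x) = \partial F(x)$, so $\partial_d Q^x_H(0) = \partial F(x)$. Because $Q^x_H$ is convex, $d = 0$ is its (unique) global minimizer if and only if $0 \in \partial_d Q^x_H(0)$; chaining the two equivalences yields $0 = \arg\min_d Q^x_H(d) \Leftrightarrow 0 \in \partial F(x)$, which is the first claim.

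For Part 2, I would invoke the strong convexity lower bound for $Q^x_H$ between the minimizer $d^*$ and the origin. Since $d^*$ minimizes $Q^x_H$, we have $0 \in \partial_d Q^x_H(d^*)$, so selecting the zero subgradient in the strong convexity inequality gives $Q^x_H(0) \geq Q^x_H(d^*) + \tfrac{\lambdamin(H)}{2}\|d^*\|^2$. Using $Q^x_H(0) = 0$ and rearranging produces $Q^x_H(d^*) \leq -\tfrac12 \lambdamin(H)\|d^*\|^2$, establishing \eqref{eq:opt}.

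I do not anticipate a serious obstacle, as both parts are elementary once the subdifferential sum rule and the strong convexity inequality are in hand. The only point requiring a little care is the justification of $\partial_d Q^x_H(d) = \nabla f(x) + Hd + \partial \psi(x+d)$, which holds here because the quadratic-plus-linear part is differentiable everywhere (so the Moreau--Rockafellar sum rule applies without a qualification condition), together with the simple but crucial observation that evaluating at $d=0$ is precisely what collapses this subdifferential to $\partial F(x)$.
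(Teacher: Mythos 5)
Your proof is correct and follows essentially the same route as the paper: Part~1 is the standard subdifferential characterization (which the paper simply cites as well known), and your Part~2 — the strong convexity inequality for $Q^x_H$ at $d^*$ evaluated at $0$ with the zero subgradient — is equivalent to the paper's computation, which starts from the same optimality condition $-\nabla f(x)-Hd^*\in\partial\psi(x+d^*)$ and applies the subgradient inequality of $\psi$ directly. The only cosmetic difference is that the paper retains the slightly sharper intermediate bound $Q^x_H(d^*)\leq-\tfrac12 (d^*)^T H d^*$ before weakening it to $-\tfrac12\lambdamin(H)\|d^*\|^2$, whereas you obtain the stated bound in one step.
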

\begin{proof}
  Part 1 is well known. For Part 2, we have from the
  optimality conditions for $d^*$ that $-\nabla f(x) - Hd^* \in
  \partial \psi(x+d^*)$. By convexity of $\psi$, we thus have
  \[
  \psi(x) \ge \psi(x+d^*) + (d^*)^T(\nabla f(x) + Hd^*)
  \;\; \Rightarrow \;\; 0 \ge Q_H^x(d^*) + \frac12 (d^*)^THd^*,
  \]
  from which the result follows.
	\qed
\end{proof}

As in \eqref{eq:1oa}, we consider the following measure of closeness to
a stationary point:
\begin{equation}
    G_k \coloneqq \arg\min_d \,
  Q_I^{x^k}(d).
\label{eq:Gk}
\end{equation}
We show that the minimum value of the norm of this measure over the
first $k$ iterations converges to zero at a sublinear rate of
$O(1/\sqrt{k})$.
The first step is to show that the minimum of $|Q_k|$ converges at a
$O(1/k)$ rate.
\begin{lemma}
\label{lemma:Qbound}
Assume that there is an $\eta \in [0,1)$ such that \eqref{eq:approx}
  is satisfied at all iterations. For Algorithm~\ref{alg:QuasiArmijo},
  if \modify{Assumption \ref{assum:general} holds} and $H_k \succeq \sigma I$ for some $\sigma>0$ and all $k$, we have
\begin{align}
\min_{0 \leq t \leq k} \left|Q_t\left( d^t \right)\right|
\leq \frac{F\left( x^0 \right) -F^*}{\gamma\left(k+1\right)\min_{0
\leq t \leq k} \alpha_t}
\leq \frac{F\left( x^0 \right) -F^*}{\gamma\left(k+1\right)}
\max\left\{1, \frac{(1 + \sqrt{\eta})L}{2 \beta\left(1 -
\gamma\right)\sigma}
\right\}.
\label{eq:Qopt}
\end{align}
For Algorithm~\ref{alg:QuasiModifyH} (requires $H_k^0 \succ 0$ for
the first variant),
we have
\begin{equation*}
\min_{0 \leq t \leq k} \left|Q_t\left(d^t\right)\right| \leq
\frac{F\left(x^0\right) - F^*}{\gamma\left(k+1\right)}.
\end{equation*}
\end{lemma}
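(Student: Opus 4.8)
The plan is to exploit the per-iteration sufficient-decrease guarantees of each algorithm together with a telescoping sum, converting the cumulative objective decrease into a bound on the smallest $|Q_t(d^t)|$ over the first $k+1$ iterations.

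For Algorithm~\ref{alg:QuasiArmijo}, I would first note that $H_k \succeq \sigma I \succ 0$ makes $Q_k$ convex, so the Armijo condition \eqref{eq:armijo}, combined with the identity $\Delta_k = Q_k(d^k) - \frac12 (d^k)^T H_k d^k$ and $H_k \succeq 0$, yields $F(x^{k+1}) - F(x^k) \leq \alpha_k \gamma Q_k(d^k)$, exactly as in \eqref{eq:Q_F}. Since $Q_k^* \leq Q_k(0) = 0$, the inexactness condition \eqref{eq:approx} forces $Q_k(d^k) \leq (1-\eta)Q_k^* \leq 0$, so $Q_k(d^k) = -|Q_k(d^k)|$ and we obtain the descent bound $F(x^k) - F(x^{k+1}) \geq \alpha_k \gamma |Q_k(d^k)|$. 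Summing over $t=0,\dots,k$ telescopes the left side to $F(x^0) - F(x^{k+1}) \leq F(x^0) - F^*$, while the right side is at least $\gamma \left(\min_{0 \leq t \leq k}\alpha_t\right)(k+1)\left(\min_{0 \leq t \leq k}|Q_t(d^t)|\right)$. Rearranging gives the first inequality in \eqref{eq:Qopt}.

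For the second inequality I would lower-bound $\alpha_t$ using Corollary~\ref{lemma:delta_d2}. That corollary is stated for $\lambdamin(H)=\sigma$, whereas here only $\lambdamin(H_k) \geq \sigma$; so I would apply it with the actual value $\sigma_k \coloneqq \lambdamin(H_k)$ to get $\alpha_t \geq \min\{1, 2\beta(1-\gamma)\sigma_k / (L(1+\sqrt{\eta}))\}$, then invoke monotonicity of $t \mapsto \min\{1, ct\}$ together with $\sigma_k \geq \sigma$ to reduce this to the uniform bound $\alpha_t \geq \bar\alpha \coloneqq \min\{1, 2\beta(1-\gamma)\sigma/(L(1+\sqrt{\eta}))\}$. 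Since $1/\bar\alpha = \max\{1, (1+\sqrt{\eta})L/(2\beta(1-\gamma)\sigma)\}$, substituting $\min_t \alpha_t \geq \bar\alpha$ into the first inequality produces the stated bound. The Algorithm~\ref{alg:QuasiModifyH} case is simpler because the step is always full ($x^{k+1} = x^k + d^k$), so no step-size bound is needed: the criterion \eqref{eq:decrease} reads $F(x^k) - F(x^{k+1}) \geq -\gamma Q_k(d^k) = \gamma|Q_k(d^k)| \geq 0$, the condition $H^0_k \succ 0$ in Variant~1 (and strong convexity of $Q_k$ generally) guaranteeing via Lemma~\ref{lemma:Hbound} that the inner loop terminates and such a $d^k$ exists; telescoping then gives $F(x^0) - F^* \geq \gamma(k+1)\min_{0 \leq t \leq k}|Q_t(d^t)|$.

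The argument is essentially a telescoping of descent inequalities, so there is no deep obstacle; the only points requiring care are (i) verifying $Q_k(d^k) \leq 0$ so that $|Q_k(d^k)| = -Q_k(d^k)$, which follows from $Q_k^* \leq 0$ and \eqref{eq:approx}, and (ii) the mild bookkeeping needed to pass from the exact-$\sigma$ hypothesis of Corollary~\ref{lemma:delta_d2} to the inequality hypothesis $H_k \succeq \sigma I$, using monotonicity of the step-size lower bound in $\sigma$.
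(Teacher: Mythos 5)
Your proposal is correct and follows essentially the same route as the paper: the sufficient-decrease inequalities \eqref{eq:Q_F} and \eqref{eq:decrease} are telescoped, $\min_t |Q_t(d^t)|$ is extracted from the sum, and $\alpha_t$ is lower-bounded via Corollary~\ref{lemma:delta_d2}. Your extra care in passing from the corollary's hypothesis $\lambdamin(H)=\sigma$ to $H_k \succeq \sigma I$ (monotonicity of the step-size bound in $\sigma$) and in verifying $Q_k(d^k)\le 0$ is sound bookkeeping that the paper leaves implicit.
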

\begin{proof}
From \eqref{eq:Q_F}, we have that for any $k \geq 0$,
\begin{equation}
F^* - F\left(x^0\right) \leq F\left(x^{k+1}\right) - F\left(x^0\right)
\leq \gamma \sum_{t=0}^k \alpha_t Q_t \left(d^t\right)
\leq \gamma \min_{0 \leq t \leq k} \alpha_t \sum_{t=0}^k Q_t\left(
d^t \right).
\label{eq:Qbound}
\end{equation}
From Corollary~\ref{lemma:delta_d2}, we have that $\alpha_t$ for all
$t$ is lower bounded by a positive value.  Therefore, using $\left|
Q_t\left(d^t\right)\right| = - Q_t\left(d^t\right)$ for all $t$, we
obtain
\begin{equation*}
\min_{0 \leq t \leq k} \left| Q_t\left(d^t\right)\right| 
\leq -\frac{1}{k+1} \sum_{t=0}^k Q_t\left(d^t\right)
\leq \frac{F\left(x^0\right) - F^*}{\gamma \left(k+1\right)\min_{0\leq
t \leq k}\alpha_t }.
\end{equation*}
Substituting the lower bound for $\alpha$ from
Corollary~\ref{lemma:delta_d2} gives the desired result
\eqref{eq:Qopt}.
The result for Algorithms~\ref{alg:QuasiModifyH}
follows from the same reasoning applied to \eqref{eq:decrease}.
	\qed
\end{proof}

The following lemma is from \cite{TseY09a}. (Its proof is omitted.)
\begin{lemma}[{\cite[Lemma 3]{TseY09a}}]
\label{lemma:Gk}
Given $H_k$ satisfying \eqref{eq:H} for all $k$, we have
\begin{equation*}
\left\|G_k\right\| \leq \frac{1 + \frac{1}{\sigma} + \sqrt{1 - 2
	\frac{1}{M} + \frac{1}{\sigma^2}}}{2}
	M \left\|d^{k*} \right\|,
\end{equation*}
where
\begin{equation*}
d^{k*} \coloneqq \arg\min Q_k.
\end{equation*}
\end{lemma}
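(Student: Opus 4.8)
The plan is to work entirely with the first-order optimality conditions for the two \emph{exact} subproblem solutions and the monotonicity of $\partial\psi$. Fix an iteration $k$ and abbreviate $x=x^k$, $g=\nabla f(x^k)$, $H=H_k$, $G=G_k$, and $d=d^{k*}$. Since $G$ minimizes $Q_I^{x}$ and $d$ minimizes $Q_H^{x}$, Fermat's rule together with the sum rule for subdifferentials (as in Part~1 of Lemma~\ref{lemma:proxg}) gives
\[
-g-G\in\partial\psi(x+G),\qquad -g-Hd\in\partial\psi(x+d).
\]
If $d=0$, then Part~1 of Lemma~\ref{lemma:proxg} forces $0\in\partial F(x)$ and hence $G=0$, so the bound holds trivially; assume henceforth $d\neq 0$.

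First I would invoke monotonicity of $\partial\psi$ (valid because $\psi$ is convex) for the two subgradients above at the points $x+G$ and $x+d$, which after cancelling $g$ yields the single inequality $(Hd-G)^{T}(G-d)\ge 0$. Introducing $w\coloneqq Hd$ (so that $d=H^{-1}w$), expanding, and rearranging gives
\[
\|G\|^{2}\le w^{T}G+G^{T}H^{-1}w-w^{T}H^{-1}w.
\]
The spectral bounds \eqref{eq:H} translate to $\tfrac1M I\preceq H^{-1}\preceq\tfrac1\sigma I$, so $\|H^{-1}w\|\le\tfrac1\sigma\|w\|$ and $w^{T}H^{-1}w\ge\tfrac1M\|w\|^{2}$. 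Applying Cauchy--Schwarz to the first two terms then produces the clean scalar estimate
\[
\|G\|^{2}\le\Bigl(1+\tfrac1\sigma\Bigr)\|w\|\,\|G\|-\tfrac1M\|w\|^{2}.
\]

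The final step is to read this as a quadratic inequality in $t\coloneqq\|G\|/\|w\|$, namely $t^{2}-(1+\tfrac1\sigma)t+\tfrac1M\le 0$, whose admissible set lies below the larger root (the discriminant is nonnegative since $\sigma\le M$); combined with $\|w\|=\|Hd\|\le M\|d\|$ this gives a bound of exactly the shape claimed,
\[
\|G_k\|\le\frac{1+\tfrac1\sigma+\sqrt{\bigl(1+\tfrac1\sigma\bigr)^{2}-\tfrac4M}}{2}\,M\,\|d^{k*}\|,
\]
in which the linear part $1+\tfrac1\sigma$ already matches the statement. The main obstacle is the discriminant: the naive Cauchy--Schwarz estimates above yield $(1+\tfrac1\sigma)^{2}-\tfrac4M$ under the root, whereas the sharper published value is $1-\tfrac2M+\tfrac1{\sigma^{2}}$. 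Recovering the latter requires a more careful, jointly optimized treatment of the cross terms $w^{T}G$, $G^{T}H^{-1}w$, and $w^{T}H^{-1}w$ (equivalently, of the relative geometry of $G$, $w$, and $H^{-1}w$) rather than bounding each in isolation; this refinement is precisely the content carried over from \cite{TseY09a}, which is why the paper cites the constant rather than re-deriving it.
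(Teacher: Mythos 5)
Your monotonicity argument is sound as far as it goes, and it is essentially the natural route: the two optimality conditions, monotonicity of $\partial\psi$, the reduction of the degenerate case $d^{k*}=0$ via Lemma~\ref{lemma:proxg}, and the spectral bounds from \eqref{eq:H} are exactly the right ingredients, and the inequality $\|G\|^2 \le w^T G + G^T H^{-1} w - w^T H^{-1} w$ with $w = H_k d^{k*}$ is correct. However, the lemma as stated is not established. Bounding $G^T H^{-1} w$ by $\sigma^{-1}\|G\|\,\|w\|$ and $w^T H^{-1} w$ by $M^{-1}\|w\|^2$ \emph{independently} places $\bigl(1+\tfrac1\sigma\bigr)^2-\tfrac4M = 1+\tfrac2\sigma+\tfrac1{\sigma^2}-\tfrac4M$ under the root, which exceeds the stated quantity $1-\tfrac2M+\tfrac1{\sigma^2}$ by exactly $\tfrac2\sigma-\tfrac2M>0$ whenever $\sigma<M$, so your bound is strictly weaker. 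The loss comes from invoking incompatible worst cases for the \emph{same} matrix $H^{-1}$ in the two terms (one estimate wants $H^{-1}$ to look like $\sigma^{-1}I$ along the relevant directions, the other like $M^{-1}I$). Recovering the stated constant requires handling the three terms jointly --- for instance, working in an eigenbasis of $H_k$ and optimizing over the components simultaneously, where the resulting scalar quadratic has a discriminant that collapses to a weighted average of $(\lambda_i-1)^2$ over the eigenvalues $\lambda_i\in[\sigma,M]$ --- and that joint treatment is the actual content of \cite[Lemma~3]{TseY09a}. You flag this yourself, so what you have is a correct proof of a weaker lemma, not of the stated one.

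Two further remarks. First, the paper does not prove this lemma either: it is quoted verbatim from \cite{TseY09a} with the proof explicitly omitted, so your deferral of the sharp constant to that reference mirrors the paper's own treatment; there is no internal proof to reproduce. Second, the weaker constant you derive would still suffice for the downstream use in Corollary~\ref{cor:Gk} --- only the numerical constant in the $O(1/\sqrt{k})$ bound on $\min_t\|G_t\|$ would degrade --- but as a proof of Lemma~\ref{lemma:Gk} as stated, the argument is incomplete until the sharper discriminant is obtained or the result is simply cited.
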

We are now ready to show the convergence of $\|G_k\|$.
\begin{corollary}
\label{cor:Gk}
Assume that \eqref{eq:approx} holds at all iterations for some $\eta
\in [0,1)$ and that \modify{Assumption \ref{assum:general} holds}. Let $\tilde{M}_1(\eta)$ and $\tilde{M}_2(\eta)$ be as
  defined in Lemma~\ref{lemma:Hbound}. For
  Algorithm~\ref{alg:QuasiArmijo}, suppose that $H_k$ satisfies
  \eqref{eq:H} for all $k \ge 0$. We then have for all
  $k=0,1,,2,\dotsc$ that
\begin{align*}
\min_{0 \leq t \leq k }\left\|G_t\right\|^2
\leq& \frac{F\left( x^0
	\right) - F^*}{\gamma \left( k+1 \right)} \, \frac{M^2\left(1 +
	\frac{1}{\sigma} + \sqrt{1 - \frac{2}{M} +
	\frac{1}{\sigma^2}}\right)^2}{2 (1 - \eta)\sigma\min_{0\leq t \leq k} \alpha_t} \\
\leq& \frac{F\left( x^0
	\right) - F^*}{\gamma \left( k+1 \right)} \, \frac{M^2\left(1 +
	\frac{1}{\sigma} + \sqrt{1 - \frac{2}{M} +
	  \frac{1}{\sigma^2}}\right)^2}{2 \sigma} \\
& \quad\quad \max\left\{\frac{1}{1 -
	\eta}, \frac{L}{2\left(1 - \sqrt{\eta}\right) \left( 1 -
	\gamma\right) \sigma \beta}\right\}.
\end{align*}
For Algorithm~\ref{alg:QuasiModifyH}, if the initial $H_k^0$ satisfies
$M_0 I \succeq H_k^0 \succeq m_0 I$ with $M_0 \ge m_0 >0$ then for
Variant 1 we have:
\begin{align*}
\min_{0 \leq t \leq k }\left\|G_t\right\|^2
\leq \frac{F\left( x^0
\right)) - F^*}{\gamma
\left( (k+1 \right))} \frac{\tilde{M}_1(\eta)^2\left(1 + \frac{1}{m_0} +
\sqrt{1 - \frac{2}{\tilde{M}_1(\eta)}+ \frac{1}{m_0^2}}\right)^2}{2 \left(1 -
\eta\right)  m_0}.
\end{align*}
For Variant 2, we have under the same assumptions on $H_k^0$ that the
same bound is satisfied,\footnote{We could instead require only $H_k^0
\succeq 0$ and start with $H_k + I$ instead.} with $\tilde{M}_1(\eta)$
replaced by $\tilde{M}_2(\eta)$.
\end{corollary}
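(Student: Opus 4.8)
The plan is to convert the optimality measure $\|G_t\|$ into the quantity $|Q_t(d^t)|$ that is already controlled by Lemma~\ref{lemma:Qbound}, by chaining together three relationships, and then to apply that lemma directly. Throughout, write $d^{t*} \coloneqq \arg\min Q_t$ and $Q_t^* \coloneqq Q_t(d^{t*})$, and let $\sigma$ and $M$ denote the uniform lower and upper bounds on $\lambdamin(H_t)$ and $\|H_t\|$, respectively.

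First I would relate $\|G_t\|$ to the \emph{exact} subproblem solution $d^{t*}$. Because $H_t$ satisfies \eqref{eq:H} (for Algorithm~\ref{alg:QuasiModifyH}, because the final $H_t$ has smallest eigenvalue at least $m_0>0$ and norm bounded by $\tilde M_1(\eta)$ or $\tilde M_2(\eta)$ from Lemma~\ref{lemma:Hbound}), Lemma~\ref{lemma:Gk} applies and gives $\|G_t\|^2 \le \frac{M^2}{4}\left(1 + \frac{1}{\sigma} + \sqrt{1 - \frac{2}{M} + \frac{1}{\sigma^2}}\right)^2 \|d^{t*}\|^2$. Second, I would bound $\|d^{t*}\|^2$ by the optimal subproblem value: Part~2 of Lemma~\ref{lemma:proxg}, applied with $H=H_t \succeq \sigma I$, yields $Q_t^* \le -\frac{\sigma}{2}\|d^{t*}\|^2$, hence $\|d^{t*}\|^2 \le \frac{2}{\sigma}|Q_t^*|$. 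Third, I would pass from $|Q_t^*|$ to the attainable inexact value using \eqref{eq:approx}: since $Q_t(d^t) \le (1-\eta)Q_t^* \le 0$, taking absolute values gives $|Q_t(d^t)| \ge (1-\eta)|Q_t^*|$, i.e.\ $|Q_t^*| \le |Q_t(d^t)|/(1-\eta)$. Concatenating these three inequalities produces, for every $t$,
\begin{equation*}
\|G_t\|^2 \le \frac{M^2}{2\sigma(1-\eta)}\left(1 + \frac{1}{\sigma} + \sqrt{1 - \frac{2}{M} + \frac{1}{\sigma^2}}\right)^2 |Q_t(d^t)|,
\end{equation*}
with a multiplicative constant independent of $t$.

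Minimizing both sides over $0 \le t \le k$ and invoking the bound on $\min_t|Q_t(d^t)|$ from Lemma~\ref{lemma:Qbound} then delivers the first displayed inequality for Algorithm~\ref{alg:QuasiArmijo}; substituting the lower bound $\min_t\alpha_t \ge \bar\alpha$ from Corollary~\ref{lemma:delta_d2} and using the simplification $(1+\sqrt\eta)/(1-\eta)=1/(1-\sqrt\eta)$ yields the second. For Algorithm~\ref{alg:QuasiModifyH} the identical chain applies, except that there is no line-search factor (the step is $x^{t+1}=x^t+d^t$), so the second, $\alpha$-free form of Lemma~\ref{lemma:Qbound} is used; I would set $\sigma=m_0$ and $M=\tilde M_1(\eta)$ or $\tilde M_2(\eta)$, noting that the smallest eigenvalue of $H_t$ only grows under the scaling in Variant~1 and under the addition of a positive multiple of the identity in Variant~2, so it remains at least $m_0>0$.

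I do not anticipate a genuine obstacle, as every ingredient is already in place; the work is the careful composition of the three bounds and the bookkeeping of constants. The two points demanding attention are confirming that the eigenvalue bounds $\sigma$ and $M$ feeding Lemma~\ref{lemma:Gk} are the \emph{final} ones actually used in each algorithm, and verifying that the algebraic identity above reproduces exactly the claimed $\max\{1/(1-\eta),\,L/(2(1-\sqrt\eta)(1-\gamma)\sigma\beta)\}$ dependence on $\eta$.
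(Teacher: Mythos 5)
Your proposal is correct and follows essentially the same route as the paper's own proof: it chains the inexactness condition \eqref{eq:approx}, Part~2 of Lemma~\ref{lemma:proxg}, and Lemma~\ref{lemma:Gk} to relate $\|G_t\|^2$ to $|Q_t(d^t)|$, then invokes Lemma~\ref{lemma:Qbound} (with the step-size bound from Corollary~\ref{lemma:delta_d2} and the identity $(1+\sqrt{\eta})/(1-\eta)=1/(1-\sqrt{\eta})$ for Algorithm~\ref{alg:QuasiArmijo}, and with $\sigma=m_0$, $M=\tilde M_1(\eta)$ or $\tilde M_2(\eta)$ for Algorithm~\ref{alg:QuasiModifyH}). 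The only cosmetic difference is that you bound $\|G_t\|^2$ uniformly in $t$ before minimizing, whereas the paper starts from the index $\bar k$ minimizing $|Q_t(d^t)|$; the content is identical.
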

\begin{proof}
Let $\bar{k} \coloneqq \arg \min_{0\leq t \leq k}\left|Q_t(d^t)\right|$, the
condition \eqref{eq:approx} and Lemmas~\ref{lemma:proxg} and \ref{lemma:Gk}
imply
\begin{align}
	\nonumber
-Q_{\bar{k}}\left(d^{\bar{k}}\right)
&\geq -\left(1 - \eta\right) Q_{\bar k}^*\\
\nonumber
&\geq \frac{\sigma\left(1 - \eta\right)}{2} \left\|d^{\bar{k} *}\right\|^2\\
&\geq \frac{2 \sigma\left(1 - \eta\right)}{M^2 \left(1 +
	\frac{1}{\sigma} + \sqrt{1 - \frac{2}{M} +
	\frac{1}{\sigma^2}}\right)^2}\left\|G_{\bar k}\right\|^2.
\label{eq:Gkbound}
\end{align}
Finally, we note that $\|G_{\bar k}\| \geq \min_{0\leq t \leq k}
\|G_t\|$. The proof is finished by combining \eqref{eq:Gkbound} with
Lemma~\ref{lemma:Qbound}.
	\qed
\end{proof}

If we replace the definition of $G_k$ in \eqref{eq:Gk} by the solution
of \eqref{eq:quadratic}, the inequality in Lemma~\ref{lemma:Gk} is not
needed.  In particular, when we use the proximal gradient algorithm
with $H_k = LI$ and $\eta = 0$ (so that \eqref{eq:decrease} holds
with $\gamma = 1$, and $M = L$)
we obtain a bound of $2(F(x^0) - F^*) /
(L(k+1))$ on $\|d^k\|^2$,
matching the result shown in \cite{Nes13a,DruL16a}.

\subsubsection{Comparison Among Different Approaches}

Algorithms~\ref{alg:QuasiArmijo} and \ref{alg:QuasiModifyH} both
require evaluation of the function $F$ for each choice of the
parameter $\alpha_k$, to check whether the decrease conditions
\eqref{eq:armijo} and \eqref{eq:decrease} (respectively) are
satisfied. The difference is that Algorithm~\ref{alg:QuasiModifyH} may
also require solution of the subproblem \eqref{eq:quadratic} for each
$\alpha_k$. This additional computation comes with two potential
benefits.  First, the second variant of
Algorithm~\ref{alg:QuasiModifyH} allows the initial choice of
approximate Hessian $H^0_k$ to be indefinite, although the final value
$H_k$ at each iteration needs to be positive semidefinite for our
analysis to hold.  (There is a close analogy here to trust-region
methods for nonconvex smooth optimization, where an indefinite Hessian
is adjusted to be positive semidefinite in the process of solving the
trust-region subproblem.)  Second, because full steps are always taken
in Algorithm~\ref{alg:QuasiModifyH}, any structure induced in the
iterates $x^k$ by the regularizer $\psi$ (such as sparsity) will be
preserved. This fact in turn may lead to faster convergence, as the
algorithm will effectively be working in a low-dimensional subspace.

\section{Choosing $H_k$}
\label{sec:H}

Here we discuss some ways to choose $H_k$ so that the algorithms are
well defined and practical, and our convergence theory can be applied.

When $H_k$ are chosen to be positive multiples of identity ($H_k =
\zeta_k I$, say), our algorithms reduce to variants of proximal
gradient.  If we set $\zeta_k  \geq L$, then the unit step size is
always accepted even if the problem is not solved exactly, because
$Q_k(d^k)$ is an upper bound of $F(x^k) - F(x^k+d^k)$.  When $L$ is
not known in advance, adaptive strategies can be used to find it.  For
Algorithm~\ref{alg:QuasiModifyH}, we
could define $\zeta_k^0$ (such that $H_k^0 = \zeta_k^0 I$) to be the
final value $\zeta_{k-1}$ from the previous iteration, possibly
choosing a smaller value at some iterations to avoid being too
conservative. For Algorithm~\ref{alg:QuasiArmijo}, we could increase
$\zeta_k^0$ over $\zeta_{k-1}$ if backtracking was necessary at
iteration $k-1$, and shrink it when a unit stepsize sufficed for
several successive iterations.


The proximal Newton approach of setting $H_k = \nabla^2 f(x^k)$ is a
common choice in the convex case \citep{LeeSS14a}, where we can
guarantee that $H_k$ is at least positive semidefinite.  In
\cite{LeeSS14a}, it is shown that in some neighborhood of the optimum,
when $d^k$ is the exact solution of \eqref{eq:quadratic}, then unit
step size is always taken, and superlinear or quadratic convergence to
the optimum ensues.  (A global complexity condition is not required
for this result.)  Generally, however, indefiniteness in $\nabla^2
f(x^k)$ may lead to the search direction $d^k$ not being a descent
direction, and the backtracking line search will not terminate in this
situation. (Our convergence results for
Algorithm~\ref{alg:QuasiArmijo} do not apply in the case of $H_k$
indefinite.)
A common fix is to use damping, setting $H_k= \nabla^2 f(x^k)+ \zeta_k
I$, for some $\zeta_k \ge 0$ that at least ensures positive
definiteness of $H_k$. Strategies for choosing $\zeta_k$ adaptively
have been the subject of much research in the context of smooth
minimization, for example, in trust-region methods and the
Levenberg-Marquardt method for nonlinear least squares (see
\cite{NocW06a}). Variant 2 of our Algorithm~\ref{alg:QuasiModifyH}
uses this strategy. It is desirable to ensure that $\zeta_k \to 0$ as
the iterates approach a solution at which local convexity holds, to
ensure rapid local convergence.



An L-BFGS approximation of $\nabla^2 f(x^k)$ could also be used for
$H_k$. When $\psi$ is not present in \eqref{eq:f} and $f$ is strongly
convex, it is shown in \cite{LiuN89a} that this approach has global
linear convergence because the eigenvalues of $H_k$ are restricted to
a bounded positive interval. This proof can be extended to our
algorithms, when a convex $\psi$ is present in \eqref{eq:f}. When $f$
is not strongly convex, one can apply safeguards to the L-BFGS update
procedure (as described in \cite{LiF01a}) to ensure that the upper and
lower eigenvalues of $H_k$ are bounded uniformly away from zero.


Another interesting choice of $H_k$ is a block-diagonal approximation
of the Hessian, which (when $\psi$ can be partitioned accordingly)
allows the subproblem \eqref{eq:quadratic} to be solved in parallel
while still retaining some curvature information. Strategies like this
one are often used in distributed optimization for machine learning
problems (see, for example,
\anonymous{\cite{Yan13a,ZheXXZ17a}}{\cite{Yan13a,LeeC17a,ZheXXZ17a}}).

\section{Numerical Results}
\label{sec:exp}

We sketch some numerical simulations that support our theoretical
results.  We conduct experiments on two different problems:
$\ell_1$-regularized logistic regression, and the Lagrange dual
problem of $\ell_2$-regularized squared-hinge loss minimization. The
algorithms are implemented in C/C++.

\subsection{$\ell_1$-regularized Logistic Regression}
Given training data points $(a_i, b_i) \in \R^{n} \times \{-1,1\}$,
$i=1,\dotsc, l$, and a specified parameter $C>0$, we solve the
following convex problem
\begin{equation}
	\min_{x \in \R^n}\, C\sum_{i=1}^l \psi\left(1 + \exp\left(-b_i
	a_i^T x \right)\right) + \|x\|_1.
	\label{eq:l1lr}
\end{equation}
We define $H_k$ to be the limited-memory BFGS approximation
\citep{LiuN89a} based on the past 10 steps, with a safeguard mechanism
proposed in \cite{LiF01a} to ensure uniform boundedness of $H_k$.  The
subproblems \eqref{eq:quadratic} are solved with SpaRSA
\citep{WriNF08a}, a proximal-gradient method which, for bounded $H_k$,
converges globally at a linear rate.  We consider the publicly
available data sets listed in
Table~\ref{tbl:data},\footnote{Downloaded from
  \url{http://www.csie.ntu.edu.tw/~cjlin/libsvmtools/datasets/}.}  and
present empirical convergence results by showing the relative
objective error, defined as
\begin{equation}
	\label{eq:optf}
	\frac{F(x) - F^*}{F^*},
\end{equation}
where $F^*$ is the optimum, obtained approximately through running our
algorithm with long enough time.  For all variants of our framework,
we used parameters $\beta = 0.5$, and $\gamma = 10^{-4}$.  Further
details of our implementation are described in \cite{LeeLW18a}.

We use the two smaller data sets a9a and rcv1 to quantify the
relationship between accuracy of the subproblem solution and the
number of outer iterations.  We compare running SpaRSA with a fixed
number of iterations $T \in \{5,10,15,20,25,30\}$.
Figure~\ref{fig:conv} shows that, in all cases, the number of outer
iterations decreases monotonically as the (fixed) number of inner
iterations is increased. For $T \ge 15$, the degradation in number of
outer iterations resulting from less accurate solution of the
subproblems is modest, as our theory suggests.  We also observe the
initial fast linear rates in the early stages of the method that are
predicted by our theory, settling down to a slower linear rate on
later iterations, but with sudden drops of the objective, possibly as
a consequence of intermittent satisfaction of the condition in Part 1
of Theorem~\ref{thm:sublinear0}.


Next, we examine empirically the step size distribution for
Algorithm~\ref{alg:QuasiArmijo} and how often in
Algorithm~\ref{alg:QuasiModifyH} the matrix $H_k$ needs to be
modified. On both a9a and rcv1, the initial step estimate $\alpha=1$
is accepted on over 99.5\% of iterations in
Algorithm~\ref{alg:QuasiArmijo}, while in both variants of
Algorithm~\ref{alg:QuasiModifyH}, the initial choice of $H_k$ is used
without modification on over 99\% of iterations. These statistics hold
regardless of the value of $T$ (the number of inner iterations),
though in the case of Algorithm~\ref{alg:QuasiModifyH}, we see a faint
trend toward more adjustments for larger values of $T$. When
adjustments are needed, they never number more than $4$ at any one
iteration, except for a single case (a9a for Variant 1 of
Algorithm~\ref{alg:QuasiModifyH} with $T=5$) for which up to $8$
adjustments are needed.

We next compare our inexact method with an exact version, in which the
subproblems \eqref{eq:quadratic} are solved to near-optimality at each
iteration.
Since the three algorithms behave similarly, we use
Algorithm~\ref{alg:QuasiArmijo} as the representative for this
investigation.  We use a local cluster with 16 nodes for the two
larger data sets rcv1 and epsilon, while for the small data set a9a,
only one node is used.  Iteration counts and running time comparisons
are shown in Figure~\ref{fig:exact}.  The exact version requires fewer
iterations, as expected, but the inexact version requires only
modestly more iterations.  In terms of runtime, the inexact versions
with moderate amount of inner iterations (at least $30$) has the
advantage, due to the savings obtained by solving the subproblem
inexactly.

We note that the approach of gradually increasing the number of inner
iterations, suggested in \cite{SchT16a,GhaS16a}, produces good results
for this application, the number of iterations required being
comparable to those for the exact solver while the running time is
slightly faster than that of $T=30$ for epsilon and competitive with
it for the rest two data sets.

\begin{table}
	\centering
	\begin{tabular}{l|rrr}
		Data set & $l$ & $n$ & \#nonzeros\\
		\hline
		a9a & $32,561$ & $123$ & $451,592$\\
		rcv1\_test.binary & $677,399$ & $47,236$ & $49,556,258$\\
		epsilon & $400,000$ & $2,000$ & $800,000,000$
	\end{tabular}
	\caption{Properties of the Data Sets}
	\label{tbl:data}
\end{table}

\begin{figure}[tb]
\centering
\begin{subfigure}[b]{\linewidth}
\begin{tabular}{ccc}
	Algorithm \ref{alg:QuasiArmijo} & Variant 1 of Algorithm
	\ref{alg:QuasiModifyH} & Variant 2 of Algorithm
	\ref{alg:QuasiModifyH}\\
\includegraphics[width=.32\linewidth]{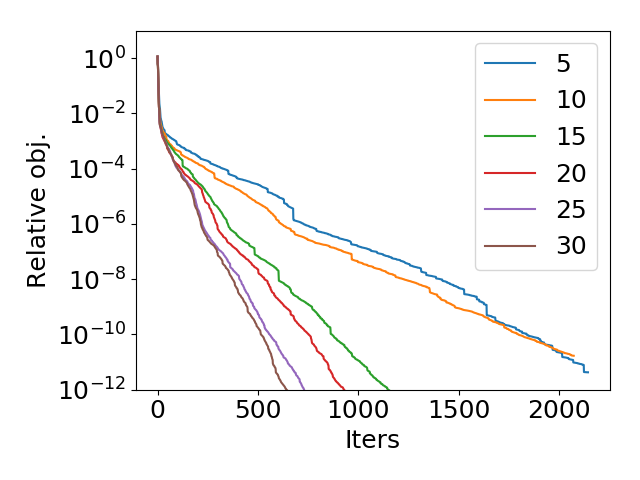}&
\includegraphics[width=.32\linewidth]{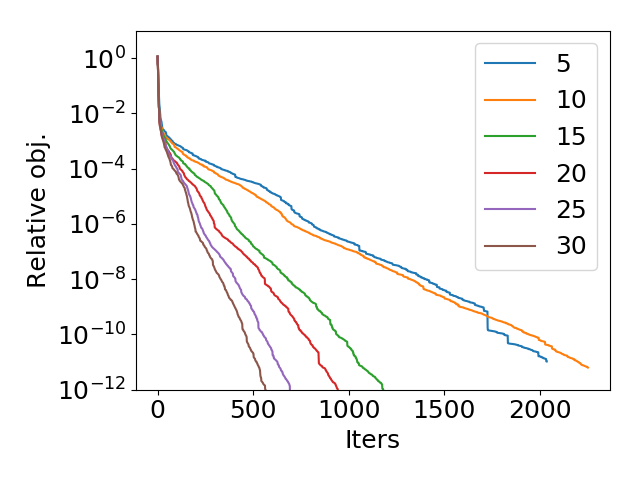}&
\includegraphics[width=.32\linewidth]{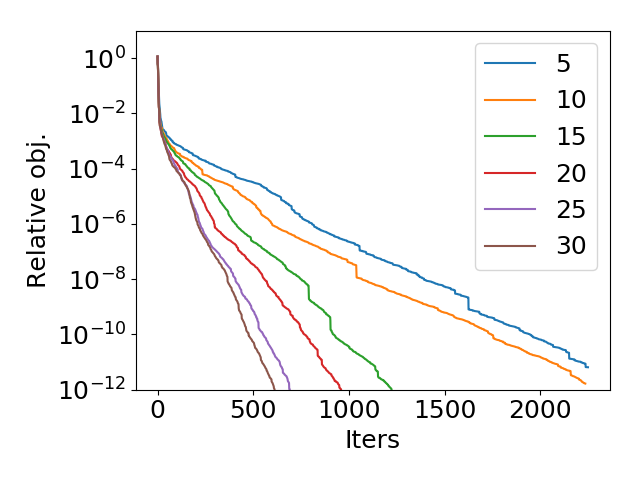}
\end{tabular}
\caption{a9a}
\end{subfigure}
\begin{subfigure}[b]{\linewidth}
\begin{tabular}{ccc}
\includegraphics[width=.32\linewidth]{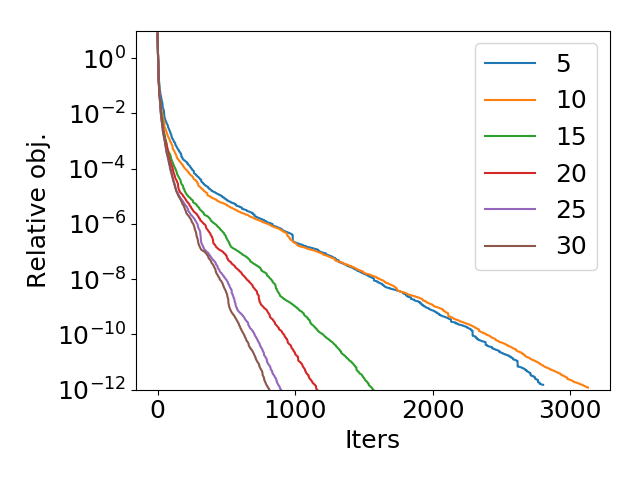}&
\includegraphics[width=.32\linewidth]{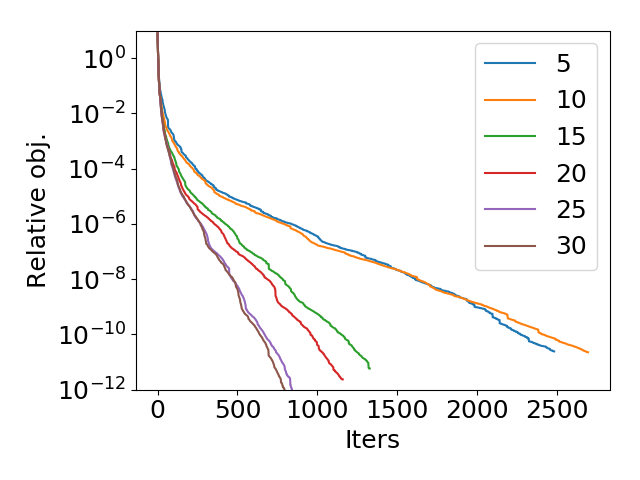}&
\includegraphics[width=.32\linewidth]{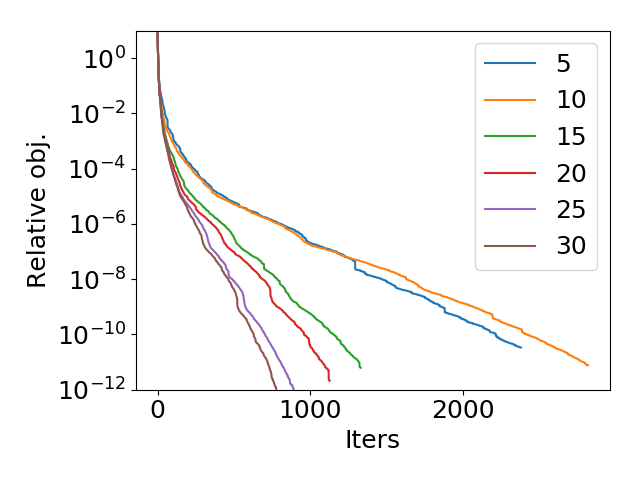}
\end{tabular}
\caption{rcv1t}
\end{subfigure}
\caption{Comparison of different subproblem solution exactness in
	solving \eqref{eq:l1lr}. The
	y-axis is the relative objective error \eqref{eq:optf}, and the
x-axis is the iteration count.}
\label{fig:conv}
\end{figure}

\begin{figure}[tb]
\centering
\begin{subfigure}[b]{0.32\linewidth}
\includegraphics[width=\linewidth]{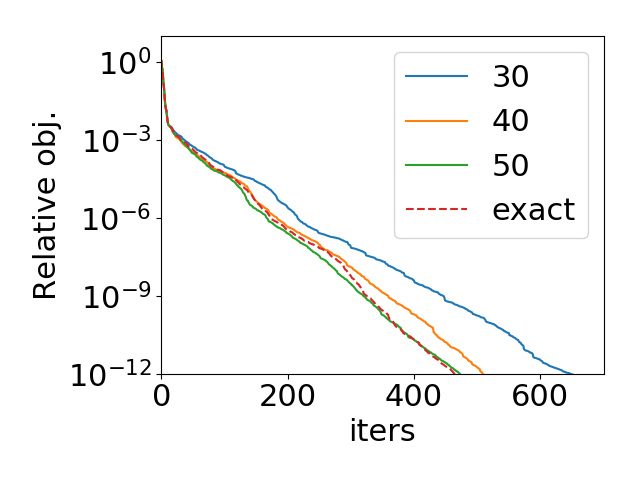}
\includegraphics[width=\linewidth]{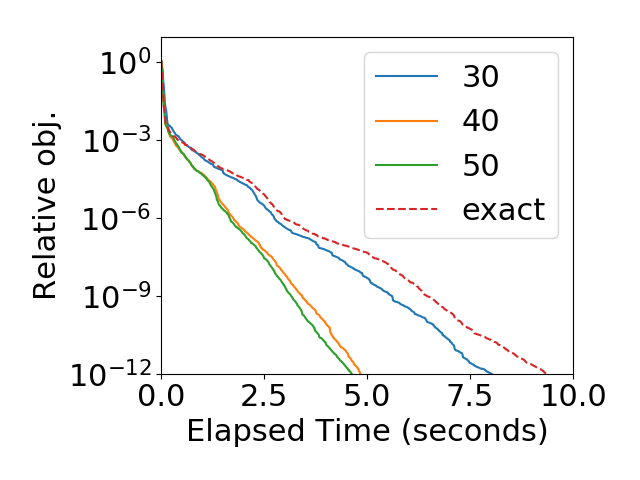}
\caption{a9a}
\end{subfigure}
\begin{subfigure}[b]{0.32\linewidth}
\includegraphics[width=\linewidth]{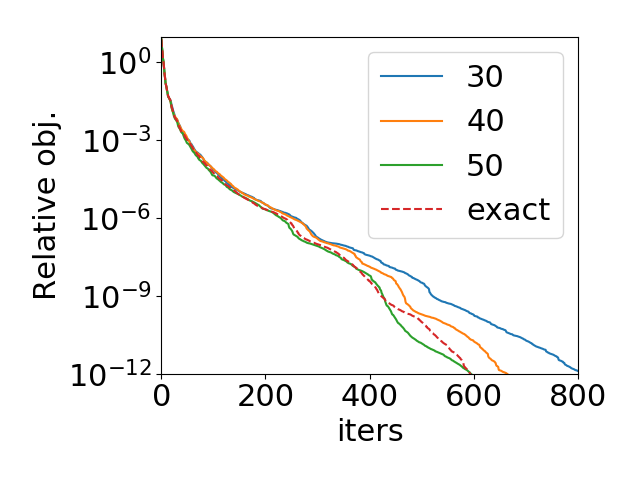}
\includegraphics[width=\linewidth]{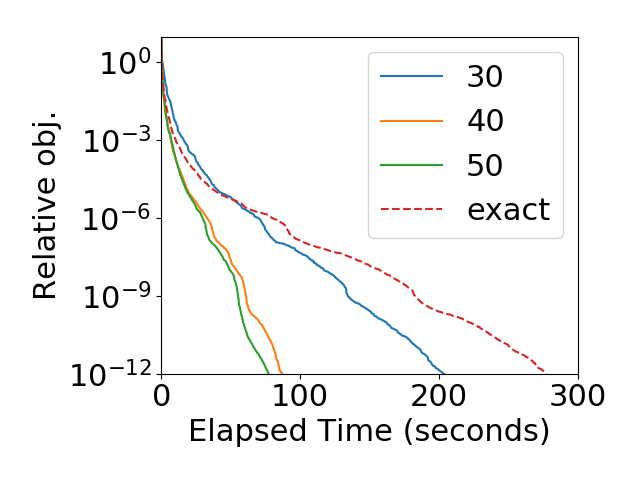}
\caption{rcv1}
\end{subfigure}
\begin{subfigure}[b]{0.32\linewidth}
\includegraphics[width=\linewidth]{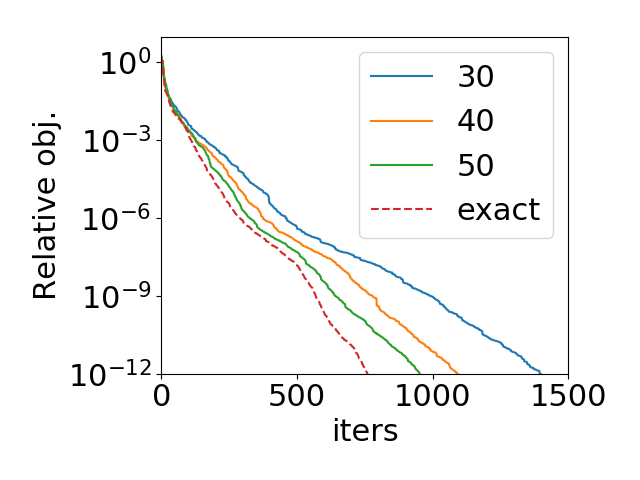}
\includegraphics[width=\linewidth]{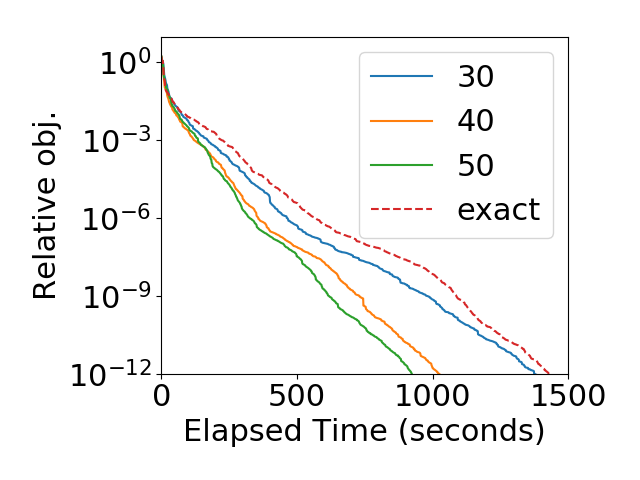}
\caption{epsilon}
\end{subfigure}
\caption{Comparison between the exact version and the
	inexact version of Algorithm \ref{alg:QuasiArmijo} for solving
	\eqref{eq:l1lr}.
Top: outer iterations; bottom: running time. The y-axis is the
relative objective error \eqref{eq:optf}.}
\label{fig:exact}
\end{figure}

\subsection{Dual of $\ell_2$-regularized Squared-Hinge Loss Minimization}

Given the same binary-labelled data points as in the previous
experiment and a parameter $C>0$, the $\ell_2$-regularized
squared-hinge loss minimization problem is
\begin{equation*}
	\min_{x\in \R^n}\, \frac12 \|x\|^2_2 + C \sum_{i=1}^l \max (1 -
	b_i a_i^T x, 0 )^2.
\end{equation*}
With the notation $A \coloneqq (b_1 a_1, b_2 a_2,\dotsc, b_l a_l)$, the dual
of this problem is
\begin{equation}	\label{eq:dual}
\min_{\alpha \ge 0} \, \frac{1}{2} \alpha^T A^T A \alpha - \bfone^T
\alpha + \frac{1}{4C} \|\alpha\|_2^2,
\end{equation}
which is $(1/ 2C)$-strongly convex.
We consider the distributed setting such that the columns of $A$ are
stored across multiple processors.  In this setup, only the
block-diagonal parts (up to a permutation) of $A^T A$ can be easily
formed locally on each processor. We take $H_k$ to be the matrix
formed by these diagonal blocks, so that the subproblem
\eqref{eq:quadratic} can be decomposed into independent parts. We use
cyclic coordinate descent with random permutation (RPCD) as the solver
for each subproblem. (Note that this algorithm partitions trivially
across processors, because of the block-diagonal structure of $H_k$.)

Our experiment compares the strategy of performing a fixed number of
RPCD iterations for each subproblem with one of increasing the number
of inner iterations as the algorithm proceeds, as in
\cite{SchT16a,GhaS16a}. We use the data sets in Table~\ref{tbl:data},
and compare the two strategies on Algorithm~\ref{alg:QuasiArmijo}, but
use an exact line search to choose $\alpha_k$ rather than the
backtracking approach. (An exact line search is made easy by the
quadratic objective.)  For the first strategy, we use ten iterations
of RPCD on each subproblem, while for the second strategy, we perform
$1 + \lfloor k / 10\rfloor$ iterations of RPCD at the $k$th outer
iteration as suggested by \cite{SchT16a,GhaS16a}. The implementation
is a modification of the experimental code of \cite{LeeR15a}.  We run
the algorithms on a local cluster with 16 machines, so that $H_k$
contains 16 diagonal blocks.  Results are shown in
Figure~\ref{fig:dual}. Since the choice of $H_k$ in this case does not
capture global curvature information adequately, the strategy of
increasing the accuracy of subproblem solution on later iterations
does not reduce the number of iterations as significantly as in the
previous experiment.  The runtime results show a significant advantage
for the first strategy of a fixed number of inner iterations,
particularly on the a9a and rcv1 data sets.
Judging from the trend in the approach of increasing inner iterations,
we can expect that the exact version will show huger disadvantage for
running time in this case.
We also observe the faster linear rate on early iterations, matching
our theory.

\begin{figure}[tb]
\centering
\begin{subfigure}[b]{0.32\linewidth}
\includegraphics[width=\linewidth]{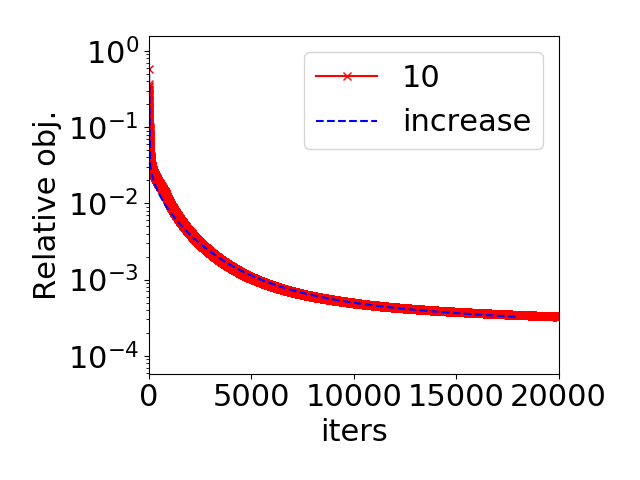}
\includegraphics[width=\linewidth]{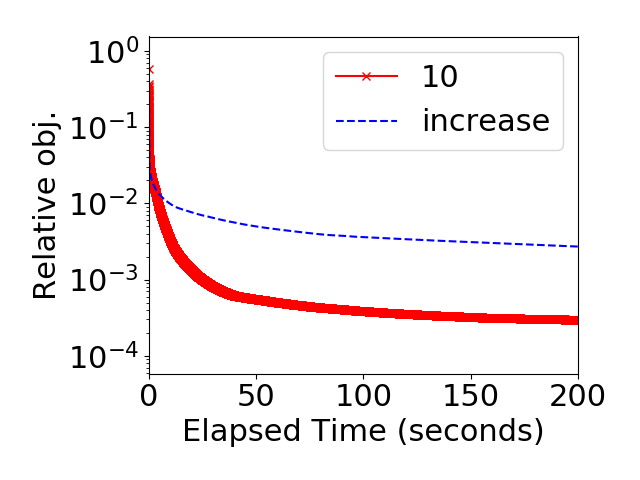}
\caption{a9a}
\end{subfigure}
\begin{subfigure}[b]{0.32\linewidth}
\includegraphics[width=\linewidth]{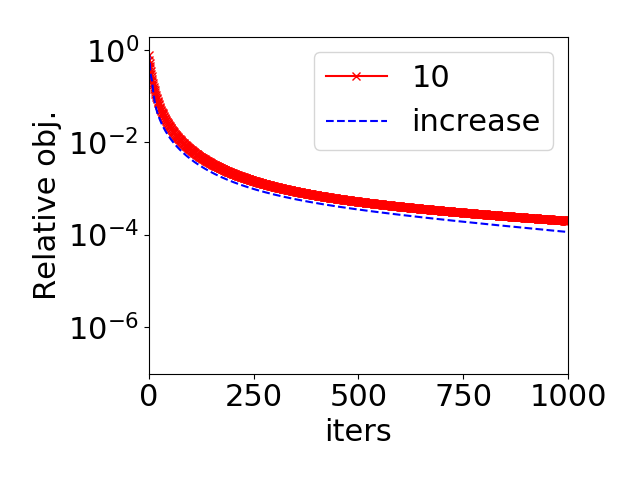}
\includegraphics[width=\linewidth]{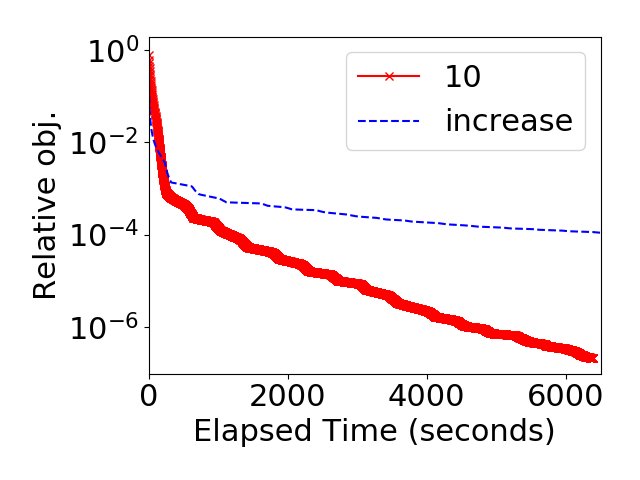}
\caption{rcv1}
\end{subfigure}
\begin{subfigure}[b]{0.32\linewidth}
\includegraphics[width=\linewidth]{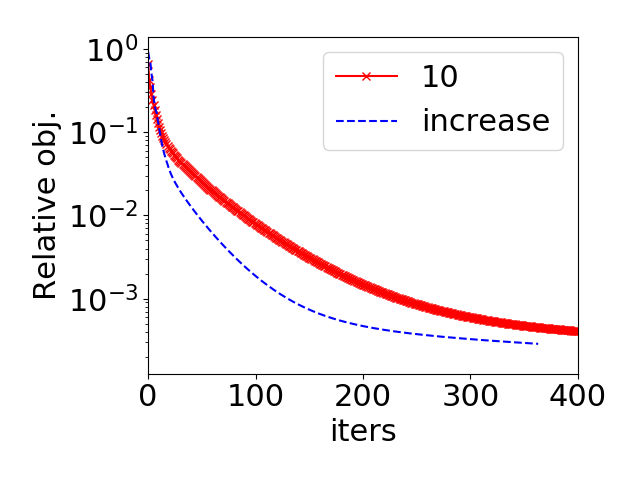}
\includegraphics[width=\linewidth]{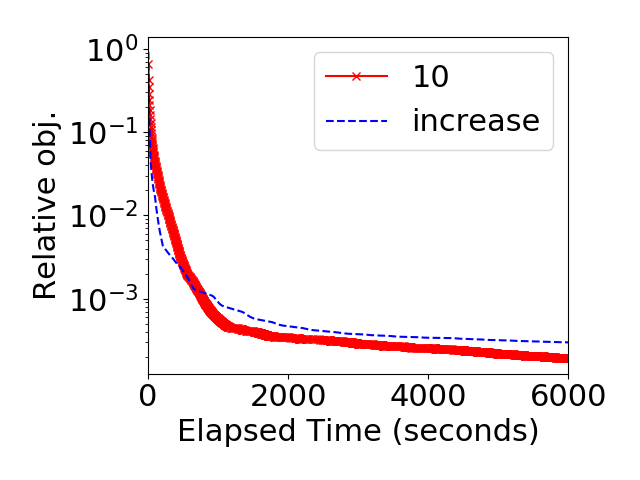}
\caption{epsilon}
\end{subfigure}
\caption{Comparison of two strategies for inner iteration count in
  Algorithm~\ref{alg:QuasiArmijo} applied to \eqref{eq:quadratic}:
  Increasing accuracy on later iterations (blue) and a fixed number of
  inner iterations (red).  Top: outer iterations; bottom: running
  time. Vertical axis shows relative objective error \eqref{eq:optf}.}
\label{fig:dual}
\end{figure}

\section{Conclusions} \label{sec:conclusions}
We have analyzed global convergence rates of three practical inexact
successive quadratic approximation algorithms under different
assumptions on the objective function, including the nonconvex case.
Our analysis shows that inexact solution of the subproblems affects
the rates of convergence in fairly benign ways, with a modest factor
appearing in the bounds. When linearly convergent methods are used to
solve the subproblems, the inexactness condition holds when a fixed
number of inner iterations is applied at each outer iteration $k$.



\bibliographystyle{spmpsci}      
\bibliography{inexactprox}

\appendix
\section{Proof of Lemma \ref{lemma:Q}}
\label{app:lemmaq}
\begin{proof}
  We have
  \begin{subequations}
\begin{align}
Q^* &= \min_{d} \, \nabla f\left(x\right)^T d + \frac{1}{2}d^T H d +
	\psi\left(x+ d\right) - \psi\left(x\right) \nonumber\\
\label{eq:derive1}
&\leq \min_d \, f\left(x+d\right) + \psi\left(x+d\right) +
\frac{1}{2}d^T H d - F\left(x\right) \\
\label{eq:derive1a}
&\leq F\left(x + \lambda
	\left( P_{\Omega}\left(x\right)  - x \right) \right)+
	\frac{\lambda^2}{2}\left(P_\Omega\left(x\right)  -
	x\right)^T H \left(P_\Omega\left(x\right) - x
	\right) - F\left(x\right) \;\;
        \forall \lambda \in [0,1] \\
\label{eq:derive2}
&\leq \left(1 - \lambda\right)
	F\left(x\right) + \lambda F^* - \frac{\mu \lambda \left(1
	-\lambda\right) }{2} \left\|x - P_\Omega\left(x\right)\right\|^2\\
	\nonumber
	&\qquad +
	\frac{\lambda^2 }{2} \left(x - P_\Omega\left(x\right)\right)^T
	H\left(x - P_\Omega\left(x\right)\right)  -
	F\left(x\right) \;\; \forall \lambda \in [0,1] \\
        \nonumber
&\leq 
	\lambda(F^* - F\left(x\right)) - \frac{\mu \lambda \left(1
	-\lambda\right) }{2} \left\|x - P_\Omega\left(x\right)\right\|^2 +
	\frac{\lambda^2 }{2} \|H\| \| x - P_\Omega\left(x\right)\|^2 \;\; \forall\lambda \in [0,1], 
        \nonumber
\end{align}
  \end{subequations}
where in \eqref{eq:derive1} we used the convexity of $f$, in \eqref{eq:derive1a} we set $d=\lambda(P_{\Omega}(x)-x)$,
and in \eqref{eq:derive2} we used the optimal set strong convexity
\eqref{eq:strong} of $F$. Thus we obtain \eqref{eq:Q}.
	\qed
\end{proof}

\section{Proof of Lemma \ref{lemma:seq}}
\label{app:lemmaseq}
\begin{proof}
Consider
\begin{equation}
	\lambda_k = \arg\min_{\lambda \in [0,1]} \, -\lambda \delta_k +
	\frac{\lambda^2}{2}A_k,
	\label{eq:lambdabound}
\end{equation}
then by setting the derivative to zero in \eqref{eq:lambdabound}, we have
\begin{equation}
\lambda_k = \min\left\{1, \frac{\delta_k}{A_k}\right\}.
\label{eq:lambdat}
\end{equation}
When $\delta_k \geq A_k$, we have from
\eqref{eq:lambdat} that $\lambda_k = 1$.
Therefore, from \eqref{eq:seq} we get
\begin{equation*}
\delta_{k+1} \leq \delta_{k} + c_k\left(- \delta_k + \frac{A_k}{2}\right)
\leq \delta_{k} + c_k\left(-\delta_k + \frac{\delta_k}{2}\right)
= \left(1 - \frac{c_k}{2}\right) \delta_{k},
\end{equation*}
proving \eqref{eq:linear}.

On the other hand,
since $A \geq A_k > 0, c_k \geq 0$ for all $k$,
\eqref{eq:seq} can be further upper-bounded by
\begin{equation*}
\delta_{k+1} \leq \delta_k +
	c_k\left(-\lambda_k \delta_k + \frac{A_k}{2}\lambda_k^2\right)
	\leq \delta_k + c_k \left( -\lambda_k \delta_k + \frac{A}{2}
\lambda_k^2 \right),\quad \forall \lambda_k \in [0,1].
\end{equation*}
Now take
\begin{equation}
\lambda_k = \min\left\{1, \frac{\delta_k}{A}\right\}.
\label{eq:lambdat2}
\end{equation}
For $\delta_k \geq A \geq A_k$, \eqref{eq:linear} still applies.
If $A > \delta_k$, we have from \eqref{eq:lambdat2}
that $\lambda_k = \delta_k / A$, hence
\begin{equation}
\delta_{k+1} \leq \delta_k - \frac{c_k}{2A}\delta_k^2.
\label{eq:tmp}
\end{equation}
This together with \eqref{eq:linear} imply that $\{\delta_k\}$ is a
monotonically decreasing sequence.
Dividing both sides of \eqref{eq:tmp} by $\delta_{k+1}\delta_k$, and
from the fact that $\delta_k$ is decreasing and nonnegative, we
conclude
\begin{equation*}
\delta_k^{-1} \leq \delta_{k+1}^{-1} -  \frac{c_k
	\delta_k}{2\delta_{k+1}A} \le \delta_{k+1}^{-1} -  \frac{c_k}{2A}
\end{equation*}
Summing this inequality from $k_0$, and using $\delta_{k_0} < A$, we obtain
\begin{equation*}
	\delta_k^{-1} \geq \delta_{k_0}^{-1} + \frac{\sum_{t=k_0}^{k-1}c_t}{2 A} \geq
	\frac{\sum_{t=k_0}^{k-1}c_t + 2}{2 A} \Rightarrow \delta_k \leq
	\frac{2 A}{\sum_{t=k_0}^{k-1} c_t + 2},
\end{equation*}
proving \eqref{eq:sub}.
	\qed
\end{proof}

\end{document}